\newcommand{\Rmnum}[1]{\expandafter\@slowromancap\romannumeral #1@}
\newcommand{\hobox}[3]{\draw (0+#1,0-#2) rectangle (1+#1,-1-#2)++(-0.5,+0.5) node {$ #3$};}
\newcommand{\domscale}{0.5}
\newcommand{\pf}{\begin{proof}}
	\newcommand{\epf}{\end{proof}}
\newcommand{\eq}{\begin{equation}}
	\newcommand{\eeq}{\end{equation}}
\newcommand{\eqn}{\begin{equation*}}
	\newcommand{\eeqn}{\end{equation*}}
\newcommand{\frg}{\mathfrak{g}}
\newcommand{\frh}{\mathfrak{h}}
\newcommand{\frl}{\mathfrak{l}}
\newcommand{\frn}{\mathfrak{n}}
\newcommand{\frq}{\mathfrak{q}}
\newcommand{\fru}{\mathfrak{u}}
\newtheorem{Thm}[equation]{Theorem}
\newtheorem{Cor}[equation]{Corollary}
\newtheorem{prop}[equation]{Proposition}
\newtheorem{lem}[equation]{Lemma}
\newtheorem{Rem}[equation]{Remark}
\theoremstyle{definition}
\newtheorem{definition}[equation]{Definition}
\newtheorem{example}[equation]{Example}
\numberwithin{equation}{section}
\begin{document}

\title[Smooth right cells]{A characterization of Kazhdan--Lusztig right cells containing smooth elements }

\author{Zhanqiang Bai}
\address[Bai]{School of Mathematical Sciences, Soochow University, Suzhou 215006, P. R. China}
\email{zqbai@suda.edu.cn}

\author{Zheng-an Chen}
\address[Chen]{School of Mathematical Sciences,
	Shanghai Jiaotong University, Shanghai 200240,
	 P. R. China}
\email{zhengan$\_$chen@sjtu.edu.cn}
\subjclass[2010]{Primary 20B30; Secondary 05E10}

\bigskip

\begin{abstract}
Let $\mathfrak{g}$ be the Lie algebra  $\mathfrak{sl}(n,\mathbb{C})$. Its Weyl group is the symmetric group $S_n$.
In this paper, we want to describe some Kazhdan--Lusztig right cells  containing  {smooth elements which  parameterize the smooth Schubert varieties. These elements  are}  closely related to the study of associated varieties of highest weight modules of $\mathfrak{sl}(n,\mathbb{C})$. Firstly, we give a complete classification of the KL right cells containing only smooth elements. Then we give a sufficient condition for a KL right cell {to contain}  only non-smooth elements by using invariant subsequences and  a sufficient condition for a KL right cell  {to contain} some smooth elements. Finally, we give an efficient algorithm to find out all  the  smooth elements in a given KL right cell.

\noindent{\textbf{Keywords:}
Young tableau;  Pattern avoidance;  Kazhdan--Lusztig right cell.}
\end{abstract}

\maketitle

	\tableofcontents
\section{ Introduction}

In their famous paper \cite{KL}, Kazhdan and Lusztig introduced the concepts of right, left and two-sided cells in order to study representations of the Hecke algebras associated to a Coxeter group $W$. Now these concepts are studied by many people from representation theory and combinatorics.

Let $G=SL(n,\mathbb{C})$ be the special linear group.
Let $\mathfrak{g}$ be its simple complex Lie algebra and $\mathfrak{h}$ be a Cartan subalgebra.
Let $\Phi^+\subset\Phi$ be the set of positive roots determined by a Borel subalgebra $\mathfrak{b}$ of $\mathfrak{g}$. Denote by $\Pi$ the set of simple roots in $\Phi^+$. We fix a Borel subgroup $B\subset G$ corresponding to $\mathfrak{b}$. We have  a triangular decomposition $\mathfrak{g}=\mathfrak{n}\oplus \mathfrak{h} \oplus \mathfrak{n}^-$.
The Weyl group $W$ of $\mathfrak{g}$ is  $S_n$.

For $\lambda\in\mathfrak{h}^*$, the {\it  Verma module} $M(\lambda)$ is defined by
\begin{equation*}
M(\lambda)=U(\mathfrak{g})\otimes_{U(\mathfrak{b})}\mathbb{C}_{\lambda-\rho},
\end{equation*}
where $\mathbb{C}_{\lambda-\rho}$ is a one-dimensional $\mathfrak{b}$-module with weight $\lambda-\rho$ and $\rho$ is  half the sum of positive roots. Denote by $L(\lambda)$ the simple quotient of $M(\lambda)$.


  We use $L_w$ to denote the simple highest weight $\mathfrak{g}$-module of highest weight $-w\rho-\rho$ with  $w\in W$.
 Joseph \cite{Jo84} proved that the associated variety $V(L_w)$ is a union of orbital varieties defined as follows. Let $\mathcal{O}\subseteq \mathfrak{g}$ be a nilpotent $G$-orbit. The irreducible components of $\overline{\mathcal{O}}\cap \mathfrak{n}$ are called {\it orbital varieties} of  $\mathcal{O}$. They all take the form $\mathcal{V}(w)=\overline{B(\mathfrak{n}\cap w\mathfrak{n})}$ for some $w\in W$. Melnikov \cite{FM,Mel04,Mel04-2,Mel06} did a lot of work for the properties of orbital varieties of type $A$. The associated variety  $V(L_w)$ is  called {\it irreducible} if and only if it contains only one orbital variety. 
For a long time, people conjectured that the associated variety of any highest weight module $L_w$ is irreducible in the case of type $A$ (see \cite{BoB3} and \cite{Mel}). However,
Williamson \cite{Wi} showed that there exist counter-examples in 2014. Thus the structure of $V(L_w)$ or $V(L(\lambda)) $ is still mysterious in type $A$.

We refer to \cite{KL} or \S \ref{sec:cell} for the definition of Kazhdan--Lusztig right (resp. left and two-sided)  cell equivalence relation and use $\stackrel{R}{\sim}$ (resp. $\stackrel{L}{\sim}$ and $\stackrel{LR}{\sim}$ )  to denote the right  (resp. left and two-sided) cell equivalence relation.

Let $X_w=\overline{BwB/B}$ be the Schubert variety indexed by $w$ in the flag manifold  $G/B$. Lakshmibai and Sandhya \cite{LS} determined the smoothness of Schubert varieties for type $A$ by using pattern avoidance. It is known that for type $A$, a Schubert variety is  smooth if and only if  certain Kazhdan–-Lusztig polynomials are trivial, see for example \cite{CK}.

 From Sagan \cite{Sagan} or Bai--Xie \cite[Lemma 4.1]{BX}, we know that there is a bijection between the KL right cells in the symmetric group $S_n$ and the Young tableaux through the famous Robinson--Schensted insertion  algorithm.
We use $P(w)$ to denote the corresponding Young tableau for any $w\in S_n$.

From \cite{Jo84}, we know that  the associated variety $V (L_w)$ is constant on each KL right cell.
It is also known that if the associated variety $V (L_w)$ is reducible, the Schubert variety $X_w$ will be singular, see for example \cite[Corollary 4.3.2]{BoB3}. So there is a relationship between the reducibility  of associated varieties and non-smoothness (or smoothness)
of Schubert varieties.

In this paper, we consider the following problem:
{\it For which Kazhdan--Lusztig right cell $\mathcal{C}_R$, the  Schubert variety $X_w$ is smooth for every $w\in \mathcal{C}_R$}? Equivalently, the  Kazhdan--Lusztig polynomial $P_{e,w} (q) =1$ for all $w$ in these KL right cells $\mathcal{C}_R$.

For other KL right cells, we will give an algorithm to determine that it contains smooth elements or not. We have incorporated  this in a
simple  program. The program takes some $w$ as the input and returns the set of smooth elements.  It is available at
\begin{center}
\text{{https://github.com/zhengan-chen/Young$\_$tableaux}}.
    \end{center}
Using our program, we can easily find many elements in $S_n$ for which the corresponding associated variety of $L_w$ is irreducible. See Corollary \ref{findsmooth}.



This paper is organized as follows. In \S \ref{pre}, we  prepare some necessary preliminaries on associated varieties, pattern avoidance and KL right cells. In \S \ref{smoothcell},
we will give a complete classification of KL right cells containing only smooth elements, see Theorem \ref{main}. Then
in \S \ref{nonsmoothcell}, we  describe some special KL right cells containing only non-smooth elements. In \S \ref{twocolumn},  we  give a characterization for some special KL right cells (containing smooth elements) corresponding
to some Young tableaux with two columns. In \S \ref{algorithm}, we give an algorithm to find out all  the  smooth elements in a given KL right cell.

\section{Preliminaries}\label{pre}
In this section, we give some brief preliminaries on associated varieties of highest weight modules, pattern avoidance and the Robinson--Schensted insertion algorithm.

\subsection{Associated variety}
Let $\mathfrak{g}$ be a simple complex Lie algebra. Let $M$ be a  {finitely} generated $U(\mathfrak{g})$-module. Fix a finite dimensional generating subspace $M_0$ of $M$. Let $U_{n}(\mathfrak{g})$ be the standard filtration of $U(\mathfrak{g})$. Set $M_n=U_n(\mathfrak{g})\cdot M_0$ and
\(
\text{gr} (M)=\bigoplus\limits_{n=0}^{\infty} \text{gr}_n M,
\)
where $\text{gr}_n M=M_n/{M_{n-1}}$. Thus $\text{gr}(M)$ is a graded module of $\text{gr}(U(\mathfrak{g}))\simeq S(\mathfrak{g})$.



\begin{definition}
	The  \textit{associated variety} of $M$ is defined by
	\begin{equation*}
	V(M):=\{X\in \mathfrak{g}^* \mid f(X)=0 \text{ for all~} f\in \operatorname{Ann}_{S(\mathfrak{g})}(\operatorname{gr} M)\}.
	\end{equation*}
\end{definition}

The above  definition is independent of the choice of $M_0$ (e.g., \cite{NOT}).

\begin{definition} Let $\mathfrak{g}$ be a finite-dimensional semisimple Lie algebra. Let $I$ be a two-sided ideal in $U(\mathfrak{g})$. Then $\text{gr}(U(\mathfrak{g})/I)\simeq S(\mathfrak{g})/\text{gr}I$ is a graded $S(\mathfrak{g})$-module. Its annihilator is $\text{gr}I$. We define its associated variety by
	$$V(I):=V(U(\mathfrak{g})/I)=\{X\in \mathfrak{g}^* \mid p(X)=0\ \mbox{for all $p\in {\text{gr}}I$}\}.
	$$
\end{definition}

 We have the following proposition.

\begin{prop}[ \cite{Jo85}]
	Let $\mathfrak{g}$ be a reductive Lie algebra and $I$ be a primitive ideal in $U(\mathfrak{g})$.Then $V(I)$ is the closure of a single nilpotent coadjoint orbit $\mathcal{O}_I$ in $\mathfrak{g}^*$. In particular, for a highest weight module $L(\lambda)$, we have $V(\mathrm{Ann} (L(\lambda)))=\overline{\mathcal{O}}_{\mathrm{Ann}(L(\lambda))}$.
\end{prop}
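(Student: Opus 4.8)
The plan is to establish four facts about $V(I)$ in succession: that it is a closed, $G$-stable cone in $\mathfrak{g}^{*}$; that it lies in the nilpotent cone $\mathcal{N}$; that it is therefore a finite union of nilpotent orbit closures; and finally that this union is a single orbit closure. Only the last step is substantive; the first three are essentially formal.

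For the first two, note that since $I$ is a two-sided ideal, the adjoint action of $\mathfrak{g}$ preserves $I$, so $\operatorname{gr} I \subseteq S(\mathfrak{g})$ is stable under $\operatorname{ad}\mathfrak{g}$, hence under the coadjoint $G$-action; its zero locus $V(I) = V(\operatorname{gr} I)$ is thus a closed $G$-stable cone in $\mathfrak{g}^{*} \cong \mathfrak{g}$. To place it inside $\mathcal{N}$, I would use that a primitive ideal is the annihilator $\operatorname{Ann}(L)$ of a simple module $L$, on which the center $Z(\mathfrak{g})$ acts by a scalar character $\chi$. Hence $z - \chi(z) \in I$ for all $z \in Z(\mathfrak{g})$, and passing to principal symbols shows that every positive-degree element of $\operatorname{gr} Z(\mathfrak{g}) = S(\mathfrak{g})^{G}$ belongs to $\operatorname{gr} I$. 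By Kostant's description of $\mathcal{N}$ as the common zero locus of the positive-degree invariants in $S(\mathfrak{g})^{G}$, this yields $V(I) \subseteq \mathcal{N}$.

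Since $\mathcal{N}$ is a finite union of nilpotent $G$-orbits and $V(I)$ is closed and $G$-stable, we get $V(I) = \overline{\mathcal{O}}_{1} \cup \cdots \cup \overline{\mathcal{O}}_{r}$ for finitely many orbits, and the hard part is to show $r = 1$. Here I would invoke Duflo's theorem to write $I = \operatorname{Ann} L(\lambda)$ for a simple highest weight module $L(\lambda)$, reducing the general statement to the ``in particular'' case; then use that the associated variety $V(L(\lambda))$ of the module itself lies in the nilradical $\mathfrak{n}$ and is a union of equidimensional orbital varieties, together with the Borho--Brylinski identity $V(\operatorname{Ann} L(\lambda)) = \overline{G \cdot V(L(\lambda))}$. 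The crux, and the deepest input, is that the $G$-saturations of these orbital-variety components all coincide with the closure of a single orbit $\mathcal{O}_{I}$; this is precisely the irreducibility of $V(I)$ and is where the real work lies. The final ``in particular'' assertion is then immediate, since $\operatorname{Ann} L(\lambda)$ is primitive for any simple $L(\lambda)$ and is just the special case $\mathcal{O}_{I} = \mathcal{O}_{\operatorname{Ann} L(\lambda)}$ of the general statement.
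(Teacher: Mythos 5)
First, note that the paper offers no proof of this proposition at all: it is quoted verbatim from Joseph \cite{Jo85}, so your attempt can only be measured against the actual content of that theorem, not against an argument in the paper.

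Your formal reductions are correct and standard: $\operatorname{gr} I$ is $\operatorname{ad}$-stable, so $V(I)$ is a closed $G$-stable cone; the central character of the simple module annihilated by $I$ puts the positive-degree invariants of $S(\mathfrak{g})^{G}$ into $\operatorname{gr} I$, so Kostant's theorem gives $V(I)\subseteq\mathcal{N}$; and finiteness of nilpotent orbits then yields $V(I)=\overline{\mathcal{O}}_{1}\cup\cdots\cup\overline{\mathcal{O}}_{r}$. The citations of Duflo's theorem ($I=\operatorname{Ann}L(\lambda)$ for some simple highest weight module) and of the Borho--Brylinski identity $V(\operatorname{Ann}L(\lambda))=\overline{G\cdot V(L(\lambda))}$ are also appropriate. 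But at the decisive step you write that the coincidence of the $G$-saturations of the orbital-variety components ``is precisely the irreducibility of $V(I)$ and is where the real work lies'' --- that is, you restate the theorem rather than prove it. Everything up to that point shows only $r\geq 1$ with no control on $r$; the entire content of Joseph's result is $r=1$. Closing this gap is not a formality: after your reduction one must show that all orbital varieties occurring in $V(L(\lambda))$ are associated to one and the same nilpotent orbit (equivalently, that $G\cdot V(L(\lambda))$ contains a dense orbit), and Joseph's proof of this in \cite{Jo85} rests on genuinely deep inputs --- equidimensionality of characteristic varieties coming from Gabber's involutivity theorem, together with the Goldie-rank-polynomial and Springer-correspondence machinery built on the Kazhdan--Lusztig conjectures. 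None of this is reproduced or even sketched in your proposal, so the argument as written establishes only the (much weaker) statement that $V(I)$ is a finite union of nilpotent orbit closures.
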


Let $G$ be a connected semisimple finite dimensional complex algebraic group with Lie algebra $\mathfrak{g}$. We fix some triangular decomposition $\mathfrak{g}=\mathfrak{n }\oplus \mathfrak{h} \oplus \mathfrak{n}^{-}$. Let $\mathcal{O}$ be a nilpotent $G$ orbit. The   irreducible components of $\overline{\mathcal{O}}\cap \mathfrak{n}$  are called {\it orbital varieties} associated to $\mathcal{O}$.  {After Steinberg \cite{Ste76}}, an orbital variety has the following form
$$\mathcal{V}(w)=\overline{B(\mathfrak{n}\cap{w(\mathfrak{n})} )}$$ for some  $w$ in the Weyl group $ W$ of $\mathfrak{g}$, where $B$ is the Borel subgroup of $G$ corresponding the Borel subalgebra $\mathfrak{b}$ of $\mathfrak{g}$.

We have the following  propositions.

\begin{prop}[\cite{Jo84}]
	Let $L(\lambda)$ be a highest weight module of a simple Lie algebra $\mathfrak{g}$ with highest weight $\lambda-\rho$. Then its associated variety $V(L(\lambda))$ equals the union of some orbital varieties associated with the nilpotent coadjoint orbit $\mathcal{O}_{Ann(L(\lambda))}$ in $\mathfrak{g}^*$.
	
\end{prop}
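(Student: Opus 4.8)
The plan is to follow the classical strategy of Joseph and Borho--Brylinski: first show that $V(L(\lambda))$ is squeezed between the nilradical $\mathfrak{n}$ and the orbit closure $\overline{\caO}$ (where $\caO=\caO_{\operatorname{Ann}(L(\lambda))}$), and then pin down its irreducible components by a dimension count. Throughout I identify $\mathfrak{g}^*$ with $\mathfrak{g}$ via the Killing form, so that $\mathfrak{n}$ and $\overline{\caO}$ may be regarded as subvarieties of $\mathfrak{g}^*$.

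First I would prove $V(L(\lambda))\subseteq\mathfrak{n}$. Take the highest weight line $M_0=\mathbb{C}v_\lambda$ as generating subspace; then $\gr L(\lambda)$ is a \emph{cyclic} $S(\mathfrak{g})$-module generated by the symbol $\bar v_\lambda$. For $x\in\mathfrak{b}=\mathfrak{h}\oplus\mathfrak{n}$ one has $x\cdot v_\lambda\in M_0$, so the degree-one symbol $\sigma(x)\in S^1(\mathfrak{g})$ kills $\bar v_\lambda$; as $S(\mathfrak{g})$ is commutative and the module is cyclic, $\sigma(x)$ annihilates all of $\gr L(\lambda)$. Hence $\mathfrak{b}\subseteq\operatorname{Ann}_{S(\mathfrak{g})}(\gr L(\lambda))$, and therefore $V(L(\lambda))\subseteq\mathfrak{b}^{\perp}=\mathfrak{n}$. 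Next, writing $I=\operatorname{Ann}(L(\lambda))$, the symbols of $I$ annihilate $\gr L(\lambda)$, i.e.\ $\gr I\subseteq\operatorname{Ann}_{S(\mathfrak{g})}(\gr L(\lambda))$, whence $V(L(\lambda))\subseteq V(I)$. By the preceding Proposition $V(I)=\overline{\caO}$, so $V(L(\lambda))\subseteq\overline{\caO}\cap\mathfrak{n}$.

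By definition the irreducible components of $\overline{\caO}\cap\mathfrak{n}$ are exactly the orbital varieties attached to $\caO$, and by the Spaltenstein--Steinberg theorem this intersection is equidimensional of dimension $\tfrac12\dim\caO$. It therefore suffices to show that every irreducible component of $V(L(\lambda))$ has dimension $\tfrac12\dim\caO$: such a component is then a top-dimensional irreducible closed subset of the pure-dimensional variety $\overline{\caO}\cap\mathfrak{n}$, hence coincides with one of its components, i.e.\ with an orbital variety, and $V(L(\lambda))$ is a union of such. The upper bound is easy: since $\dim V(L(\lambda))=\GK L(\lambda)$ and Joseph's theorem gives $\GK L(\lambda)=\tfrac12\dim\caO$, the containment $V(L(\lambda))\subseteq\overline{\caO}$ already forces $\dim V(L(\lambda))=\tfrac12\dim\caO$.

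I expect the reverse bound, applied component by component (that is, the \emph{equidimensionality} of $V(L(\lambda))$), to be the main obstacle. It is exactly here that one cannot argue with dimensions alone, since a priori $\gr L(\lambda)$ could have lower-dimensional components supported in the boundary $\overline{\caO}\setminus\caO$. The way around this is Gabber's theorem that the associated variety of any finitely generated $U(\mathfrak{g})$-module is involutive; equivalently, via Beilinson--Bernstein localization, $L(\lambda)$ corresponds to a holonomic $\mathcal{D}_{G/B}$-module whose characteristic variety is a Lagrangian, hence pure-dimensional, union of conormal varieties to Schubert cells, and $V(L(\lambda))$ is recovered from it by restricting to the fibre $\mathfrak{n}\cong(\mathfrak{g}/\mathfrak{b})^*$ over the base point. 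Either formulation forces each component of $V(L(\lambda))$ to have dimension $\tfrac12\dim\caO$, and the identification of the components with orbital varieties then follows from the previous paragraph. Everything else reduces to the cyclic-module computation, the annihilator comparison, and the bookkeeping of dimensions.
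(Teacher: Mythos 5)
The paper does not actually prove this proposition; it quotes it from Joseph \cite{Jo84}, so your proposal must stand on its own. It does not, and the failure is exactly at the step you yourself flag as the main obstacle: the equidimensionality of $V(L(\lambda))$. Gabber's involutivity theorem only says that each irreducible component $C$ of $V(L(\lambda))$ is coisotropic along the symplectic leaf through its generic point; if that generic point lies in an orbit $\mathcal{O}'\subseteq\overline{\mathcal{O}}\setminus\mathcal{O}$, involutivity gives $\dim C\geq\tfrac12\dim\mathcal{O}'$, not $\tfrac12\dim\mathcal{O}$. In fact the closure of an orbital variety attached to a \emph{smaller} orbit is itself involutive (it is Lagrangian inside its own orbit), so involutivity can never exclude such components; it only yields the weaker statement that $V(L(\lambda))$ is a union of orbital-variety closures attached to various orbits contained in $\overline{\mathcal{O}}$, which is strictly less than the proposition (all components attached to the single orbit $\mathcal{O}_{\operatorname{Ann}L(\lambda)}$).

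Your D-module formulation is moreover incorrect as stated, and would not close the gap even after correction. The associated variety is not the intersection of the characteristic variety with the fibre $T^*_{eB}(G/B)\cong\mathfrak{n}$; it is the image of the characteristic variety under the moment map $\mu\colon T^*(G/B)\to\mathfrak{g}^*$ (Borho--Brylinski). These genuinely differ: in $\mathfrak{sl}_3$, the closure of the conormal variety of the smooth two-dimensional Schubert variety $\overline{C_{s_1s_2}}$ meets the fibre over $eB$ only in the line $\mathfrak{g}_{\alpha_1+\alpha_2}$, whereas $\mu$ maps it onto the two-dimensional orbital variety $\overline{B(\mathfrak{n}\cap s_1s_2\,\mathfrak{n})}=\mathfrak{g}_{\alpha_2}\oplus\mathfrak{g}_{\alpha_1+\alpha_2}$, which is the correct associated variety of the corresponding simple module (its GK-dimension is $2$, not $1$). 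And even with the correct statement, purity does not transfer: the components of $\operatorname{Ch}$ are all Lagrangian of dimension $\dim G/B$, but their images $\overline{B(\mathfrak{n}\cap w'\mathfrak{n})}$ have dimensions $\tfrac12\dim\mathcal{O}_{w'}$ for varying orbits $\mathcal{O}_{w'}$, so equidimensionality upstairs says nothing about equidimensionality of $V(L(\lambda))$. What the actual proof requires at this point is a strictly stronger input, namely Gabber's equidimensionality (purity) theorem for modules over the Auslander--Gorenstein ring $U(\mathfrak{g})$: a simple module is pure, hence every component of its associated variety has dimension exactly $\GK L(\lambda)=\tfrac12\dim\mathcal{O}$. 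Granting that, your concluding argument does work, with one small repair: Spaltenstein's theorem concerns $\mathcal{O}'\cap\mathfrak{n}$ rather than $\overline{\mathcal{O}}\cap\mathfrak{n}$, but applying it to every orbit $\mathcal{O}'\subseteq\overline{\mathcal{O}}$ gives $\dim(\overline{\mathcal{O}}\cap\mathfrak{n})\leq\tfrac12\dim\mathcal{O}$, so any irreducible closed subset of dimension $\tfrac12\dim\mathcal{O}$ is automatically an irreducible component, i.e.\ an orbital variety in the paper's sense. The first half of your proposal (the cyclic-module computation giving $V(L(\lambda))\subseteq\mathfrak{n}$, and $\gr I\subseteq\operatorname{Ann}_{S(\mathfrak{g})}(\gr L(\lambda))$ giving $V(L(\lambda))\subseteq\overline{\mathcal{O}}$) is correct.
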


The associated variety $V(L(\lambda))$ is  {irreducible}  {if and only if it coincides with a single orbital variety}.

\begin{prop}[{\cite[Corollary 4.3.2]{BoB3}}]\label{smoothequal}
	If the Schubert variety $X_w=\overline{BwB/B}$ is smooth, we will have
$ V(L_w)=\mathcal{V}(w)$.	

\end{prop}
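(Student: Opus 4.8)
The plan is to pass from the highest weight module $L_w$ to geometry on the flag variety $G/B$ via Beilinson--Bernstein localization, and then read off the associated variety from the characteristic cycle of the corresponding $\caD$-module. First I would recall that under localization $L_w$ corresponds to the simple holonomic $\caD_{G/B}$-module $\caM_w$ whose support is the Schubert variety $X_w$; equivalently, its de Rham realization is the intersection cohomology complex $\mathrm{IC}(X_w)$. The key bridge, which I would take from the general theory built around the Springer resolution (moment map) $\mu : T^*(G/B) \to \mathcal{N} \subset \mathfrak{g}^*$, is that $V(L_w)$ equals the image under $\mu$ of the characteristic variety $\mathrm{Ch}(\caM_w) \subset T^*(G/B)$, and that at the level of cycles the characteristic cycle $\mathrm{CC}(\caM_w)$ pushes forward to the associated cycle of $L_w$.

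Second, I would use the smoothness of $X_w$ in an essential way. When $X_w$ is smooth the intersection cohomology complex degenerates to the (shifted) constant sheaf on $X_w$, since all local intersection cohomology of a smooth space is trivial. Consequently $\mathrm{Ch}(\caM_w)$ is just the conormal variety $T^*_{X_w}(G/B)$, the total space of the conormal bundle to the smooth irreducible variety $X_w$. In particular the characteristic variety is irreducible and its leading term occurs with multiplicity one, so none of the extra components (which in the general, singular case are governed by nontrivial Kazhdan--Lusztig data) can appear.

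Third, I would carry out the moment-map computation identifying $\mu\bigl(T^*_{X_w}(G/B)\bigr)$ with $\mathcal{V}(w)=\overline{B(\mathfrak{n}\cap{w(\mathfrak{n})})}$. Since everything in sight is $B$-equivariant, it suffices to compute the conormal space at a single base point and then take the $B$-saturation and closure; identifying $\mathfrak{g}^*$ with $\mathfrak{g}$ via the Killing form, the conormal fiber matches $\mathfrak{n}\cap w(\mathfrak{n})$, and the closure of its $B$-orbit is exactly $\mathcal{V}(w)$. Combining this with the preceding proposition, which already tells us that $V(L_w)$ is a union of orbital varieties attached to the relevant nilpotent orbit and always contains $\mathcal{V}(w)$ as its leading component, the irreducibility established in the second step forces $V(L_w)=\mathcal{V}(w)$.

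The hard part will be the bridge in the first step: making precise, with the correct normalizations, that the associated variety of the global-sections module equals the moment-map image of the characteristic variety of the localized $\caD$-module, and that multiplicities transport correctly under $\mu$. This rests on Beilinson--Bernstein localization together with the compatibility of the standard filtration on $U(\mathfrak{g})$ with the order filtration on $\caD_{G/B}$; by comparison, the smoothness input of the second step and the explicit moment-map identification of the third step are comparatively formal.
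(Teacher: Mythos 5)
Your sketch is correct in outline, but note that the paper itself gives no proof of this proposition at all: it is imported verbatim as Corollary 4.3 of Borho--Brylinski \cite{BoB3}, and your localization argument (pass to the simple $\mathcal{D}$-module on $G/B$ supported on $X_w$, use smoothness to identify its characteristic variety with the conormal bundle $T^*_{X_w}(G/B)$, then push forward along the moment map to get $\overline{B(\mathfrak{n}\cap w(\mathfrak{n}))}$) is essentially the argument of that cited source, including its reliance on the identity $V(L_w)=\mu(\mathrm{Ch}(\mathcal{M}_w))$, which is exactly Borho--Brylinski's main comparison theorem. The only caveat worth flagging is the usual bookkeeping of conventions (dominant versus antidominant twist in localization, and the possible $w\leftrightarrow w^{-1}$ or $w\leftrightarrow w_0w$ mismatch between the highest weight $-w\rho-\rho$ and the Schubert variety $X_w$), which must be fixed consistently for the conormal fiber to come out as $\mathfrak{n}\cap w(\mathfrak{n})$ rather than a variant.
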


\subsection{Pattern avoidance}

By the definition, an element $w\in S_n$ is a permutation of the set $\{1,2,...,n\}$. In general, we use $w=(w_1,...,w_n)$ to denote this permutation, where $w_i=w(i)$.

Firstly we have the following definition.

\begin{definition}
	The element $w = (w_1, ..., w_n)\in S_n$ {\it contains the pattern} $3412$
	 (resp. $4231$) if there exist integers $1\leq i < j < k < l\leq n$ such that
	$w_k < w_l < w_i < w_j $ (resp. $ w_l < w_j < w_k < w_i$).
	 If there is no such integers, we say $w$ {\it avoids the pattern } $3412$ and $4231$.
\end{definition}

%
%
%
%

We have the following criterion for smoothness of Schubert varieties.

\begin{prop}[\cite{LS}]
	For $\mathfrak{g}=\mathfrak{sl}(n, \mathbb{C})$ and $ W=S_n$,  the Schubert variety $X_w=\overline{BwB/B}$ is smooth if and only if
	$ w$ avoids the two patterns $3412$ and $4231$.
\end{prop}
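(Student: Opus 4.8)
The plan is to reduce global smoothness of $X_w$ to a single tangent–space computation and then convert that computation into the pattern condition. Since $X_w=\overline{BwB/B}$ is stable under the left $B$–action and its $T$–fixed points are exactly the cosets $e_v=vB/B$ with $v\le w$, the singular locus — being closed and $B$–stable — is nonempty if and only if it contains the unique minimal fixed point $e=e_{\mathrm{id}}$, which lies in the closure of every cell. Hence $X_w$ is smooth if and only if it is smooth at $e$, i.e. if and only if $\dim T_e X_w=\dim X_w=\ell(w)$.

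Next I would identify the tangent space at $e$. The torus $T$ acts on $T_e X_w$, decomposing it into root lines $\mathfrak{g}_{-\alpha}$ with $\alpha\in\Phi^+$, and a local analysis of $X_w$ near $e$ along the curves $\exp(\mathfrak{g}_{-\alpha})\cdot e$ shows that $\mathfrak{g}_{-\alpha}$ occurs in $T_eX_w$ precisely when the reflection $s_\alpha\le w$ in the Bruhat order. For $\mathfrak{g}=\mathfrak{sl}(n,\mathbb{C})$ the reflections are the transpositions $t_{ij}$ ($i<j$), so
$$\dim T_e X_w=\#\{(i,j):i<j,\ t_{ij}\le w\}.$$
Because a reduced word for $w$ already exhibits $\ell(w)$ distinct ``inversion'' reflections lying below $w$, one always has $\dim T_e X_w\ge \ell(w)$, and the defect $d(w):=\dim T_e X_w-\ell(w)\ge 0$ vanishes exactly when $X_w$ is smooth. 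The theorem is therefore equivalent to the purely combinatorial statement that $d(w)=0$ if and only if $w$ avoids $3412$ and $4231$.

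To prove this equivalence I would translate the relation $t_{ij}\le w$ through the Ehresmann rank criterion for Bruhat order. For the direction ``contains a pattern $\Rightarrow$ singular'', I would first check by direct computation that $w=3412$ and $w=4231$ each have $d(w)=1$ (in each case exactly one transposition below $w$ is not an inversion transposition); then I would show that the defect is monotone under pattern containment, so any $w$ containing either pattern satisfies $d(w)\ge 1$. For the converse, ``avoids both $\Rightarrow d(w)=0$'', I would argue that whenever some $t_{ij}\le w$ fails to be an inversion of $w$, the rank inequalities that force $t_{ij}\le w$ produce four indices realizing a $3412$ or a $4231$ pattern; avoidance then forces every transposition below $w$ to be an inversion transposition, giving $d(w)=0$.

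The main obstacle is this converse combinatorial step: extracting an explicit $3412$ or $4231$ pattern from the mere existence of a non-inversion transposition below $w$, together with making the ``defect is monotone under pattern containment'' claim precise; both require a careful case analysis of the rank function of $w$. An alternative route that relocates the difficulty is to invoke the Carrell--Peterson criterion (rational smoothness $\iff$ the Poincar\'e polynomial $\sum_{v\le w}q^{\ell(v)}$ is palindromic), to use that in the simply–laced type $A$ rational smoothness coincides with smoothness, and then to show that this Poincar\'e polynomial factors as a product $\prod_i(1+q+\cdots+q^{e_i})$ — hence is palindromic — exactly when $w$ avoids the two patterns.
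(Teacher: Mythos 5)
First, a point of orientation: the paper does not prove this proposition at all --- it is quoted from Lakshmibai--Sandhya \cite{LS} as a known smoothness criterion, with no internal proof. So there is no ``paper's own proof'' to compare against; your proposal has to stand on its own as a proof of that theorem, and it does not yet do so.

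Your skeleton (singular locus is closed and $B$-stable, hence smoothness is equivalent to smoothness at $e=e_{\mathrm{id}}$; in type $A$ one has $\dim T_eX_w=\#\{(i,j):i<j,\ t_{ij}\le w\}$; then a combinatorial equivalence with pattern avoidance) is indeed the standard route, but one supporting claim is false and the two hard pillars are left unproven. (i) Your justification of $\dim T_eX_w\ge\ell(w)$ --- ``a reduced word for $w$ already exhibits $\ell(w)$ distinct inversion reflections lying below $w$'' --- is wrong: for $w=312$ the pair $(1,3)$ is an inversion, yet $t_{13}=321\not\le 312$ since $\ell(t_{13})=3>\ell(w)=2$; in fact the transpositions below $312$ are $\{t_{12},t_{23}\}$, so the set of inversion transpositions and the set of transpositions below $w$ are genuinely different sets. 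The inequality itself is free (the tangent space at any point of a variety has dimension at least $\dim X_w=\ell(w)$), but not for your stated reason. (ii) The identity $T_eX_w=\bigoplus_{s_\alpha\le w}\mathfrak{g}_{-\alpha}$ is itself a nontrivial theorem (Lakshmibai--Seshadri), and it is special to type $A$. The curve argument you sketch only gives the inclusion $\supseteq$, since $\exp(\mathfrak{g}_{-\alpha})\cdot e\subseteq X_{s_\alpha}\subseteq X_w$ whenever $s_\alpha\le w$; the reverse inclusion (that no other root spaces occur) is the hard part, and ``a local analysis shows'' does not supply it. (iii) Most importantly, the combinatorial heart of the theorem --- that the defect $d(w)$ vanishes if and only if $w$ avoids $3412$ and $4231$ --- is deferred in both directions: the monotonicity of $d$ under pattern containment and the extraction of a pattern from a non-inversion transposition below $w$ are announced (``I would show\dots'') rather than proved, and you yourself flag them as the main obstacle. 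The alternative route through the Carrell--Peterson palindromicity criterion has the same status: every nontrivial ingredient is invoked, none established. As written, this is a correct plan of attack on a deep theorem, not a proof; for the purposes of this paper the honest move is exactly what the authors do, namely cite \cite{LS}.
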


In general, $w$ is called a smooth element when $X_w$ is smooth.

\subsection{Hecke algebra and cells }\label{sec:cell}
Recall that the Weyl group $ W  $ is a Coxeter group generated by $ S=\{s_\alpha\mid\alpha\in\Delta \} $. Let $\ell(-)$ be the length function on $W$.
 Given an indeterminate $v$, the Hecke algebra $ \mathcal{H} $ over $ \mathcal{A} :=\mathbb{Z}[q,q^{-1}]$ is generated by $ T_w $, $ w\in W $ with relations \[
T_wT_{w'}=T_{ww'} \text{ if }\ell(ww')=\ell(w)+\ell(w'),
\]
\[
\text{and }(T_s+q^{-1})(T_s-q)=0 \text{ for any }s\in S.
\]
The unique elements $ C_w $ such that
\[
\overline{C_w}=C_w,\qquad C_w\equiv T_w \mod{\mathcal{H}_{<0}}
\]
are known as the \textit{Kazhdan--Lusztig} (KL) \textit{basis} of $ \mathcal{H} $, where $ \bar{\,} :\mathcal{H}\rightarrow\mathcal{H}$ is the bar involution such that $ \bar{q}=q^{-1} $, $ \overline{T_w} =T_{w^{-1}}^{-1}$, and $ \mathcal{H}_{<0}=\bigoplus_{w\in W}\mathcal{A}_{<0}T_w $ with $ \mathcal{A}_{<0}=q^{-1}\mathbb{Z}[q^{-1}] $.

If $ C_y $ occurs in the expansion of $ hC_w $ (resp. $C_wh$) with respect to the KL-basis for some $ h\in\mathcal{H} $, then we write $ y\leftarrow_L w $ (resp. $ y\leftarrow_R w $). Extend $ \leftarrow_L $ (resp. $ \leftarrow_R $) to a preorder $ \leq_L $ (resp. $\leq _R$) on $ W $. For $x, w\in W$, write $x \leq_{LR} w$ if there exists $x=w_1, \cdots, w_n=w$ such that for every $1\leq i<n$ we have either $w_i\leq_L w_{i+1}$ or $w_i\leq_R w_{i+1}$. Let $\stackrel{L}{\sim}$, $\stackrel{R}{\sim}$, $\stackrel{LR}{\sim}$ be the equivalence relations associated with $\stackrel{L}{\sim}$, $\stackrel{R}{\sim}$, $\stackrel{LR}{\sim}$ (for example, $x\stackrel{L}{\sim}w$ if and only if $x\leq_L w$ and $w\leq_Lx$). The equivalence classes on $W$ for $\stackrel{L}{\sim}$, $\stackrel{R}{\sim}$, $\stackrel{LR}{\sim}$ are called \textit{left cells}, \textit{right cells} and \textit{two-sided cells} respectively.

\begin{prop}[{\cite[II. 9.8]{Spa}}]\label{geometriccell}
    Suppose  $x,y\in S_n$. Then we have $\mathcal{V}(x)=\mathcal{V}(y)$ if and only if $x\stackrel{R}{\sim} y$.
\end{prop}

\begin{prop}[{\cite[Lemma 6.6]{Jo84}}; {\cite[Corollary 6.3]{BoB3}}]\label{constant}
$V(L_w)$ is constant on each KL right cell.	
	
\end{prop}

\subsection {Robinson--Schensted insertion algorithm}
In this subsection, we recall the famous Robinson--Schensted insertion  algorithm. Some details can be found in \cite{Ar} and \cite{Sagan}.

\begin{definition}[Robinson--Schensted insertion algorithm]
For an element  $ w\in\ S_n $, we write  $w=(w_1,...,w_n)$. We associate to $w $ a  Young tableau  $ P(w) $ as follows. Let $ P_0 $ be an empty Young tableau. Assume that we have constructed Young tableau $ P_k $ associated to $ (w_1,\cdots,w_k) $, $ 0\leq k<n $. Then $ P_{k+1} $ is obtained by adding $ w_{k+1} $ to $ P_k $ as follows. Firstly we add $ w_{k+1} $ to the first row of $ P_k $ by replacing the leftmost entry $ x $ in the first row which is \textit{strictly} bigger than $ w_{k+1} $.  (If there is no such an entry $ x $, we just add a box with entry $w_{k+1}  $ to the right side of the first row, and end this process). Then add $ x $ to the next row as the same way of adding $w_{k+1} $ to the first row.  Finally we put $P(w)=P_n$.
Let $Q(w)$ be the recording tableau such that $1,2,...,n$ are placed in the $Q$'s so that shape of $P_k$ equal to the shape of $Q_k$ for all $1\leq k\leq n$. Thus $Q(w)=Q_n$ and $\mathrm{sh}(P(w))=\mathrm{sh}(Q(w))$.
\end{definition}

We use $p(w)=[p_1,...,p_k]$ to denote the shape of $P(w)$, where $p_i$ is the number of boxes in the $i$-th row of  $P(w)$. So  $[p_1,...,p_k]$
is a partition of $n$, denoted by $p(w)\vdash n$.

\begin{prop}[{\cite[Theorem 3.1.1 $\&$ Theorem 3.6.6]{Sagan}}]
\label{prop::2.11}
The map $w\rightarrow (P(w),Q(w))$ is a bijection between elements of $S_n$ and pairs of standard tableaux of the same shape
$p(w)\vdash n$. We also have $P(w^{-1})=Q(w)$ and $Q(w^{-1})=P(w)$ for any $w\in S_n$.
\end{prop}

\begin{prop}[ \cite{Ar} or \cite{KL}]
\label{prop::2.12}
	For $\mathfrak{g}=\mathfrak{sl}(n, \mathbb{C})$ and $ W=S_n$,  two elements $x$ and $ y$ in $S_n$ are in the same KL right cell if and only if $P(x)=P(y)$.
\end{prop}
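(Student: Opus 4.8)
The plan is to show that the right-cell equivalence relation on $S_n$ coincides with the Knuth (plactic) equivalence relation, and then to quote Knuth's theorem: two words $x=(x_1,\dots,x_n)$ and $y=(y_1,\dots,y_n)$ satisfy $P(x)=P(y)$ if and only if they are connected by a sequence of elementary Knuth transformations. Throughout I use the convention matching the statement, in which $P(w)$ is the insertion tableau and a Knuth move acts on three consecutive \emph{positions} of the one-line word, hence is a right multiplication by a simple reflection $s_i=(i,i+1)$. A useful preliminary invariant is the left descent set $\mathcal{L}(w)=\{i:s_iw<w\}$; a direct computation gives $\mathcal{L}(w)=\mathrm{Des}(w^{-1})=\mathrm{Des}(P(w))$, so $\mathcal{L}$ is already constant on the fibers of $w\mapsto P(w)$, a first shadow of the statement to be proved.

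For the direction $P(x)=P(y)\Rightarrow x\stackrel{R}{\sim}y$, by Knuth's theorem it suffices to treat a single elementary Knuth transformation. Each such transformation exchanges the two elements of a two-element right string for an adjacent pair $\{s_i,s_{i+1}\}$; that is, it is a star operation $w\mapsto w^{*}$. The key input is that star operations preserve cells: for a two-element $\{s_i,s_{i+1}\}$-string $\{w,w^{*}\}$ the relevant Kazhdan--Lusztig $\mu$-coefficient equals $1$, which places $w$ and $w^{*}$ mutually below one another in the right preorder $\leq_R$, so $w\stackrel{R}{\sim}w^{*}$. Hence a single Knuth move keeps one inside the same right cell, so every Knuth class (equivalently, every fiber of $P$) lies in a single right cell, and the partition into right cells is coarser than the partition into $P$-fibers.

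The reverse inclusion $x\stackrel{R}{\sim}y\Rightarrow P(x)=P(y)$ is the main obstacle and is not formal. Since each right cell is a union of $P$-fibers, it suffices to prove that the two partitions have equally many blocks. The number of $P$-fibers equals the number of distinct insertion tableaux, namely the number of standard Young tableaux with $n$ boxes, $\sum_{\lambda\vdash n}f^{\lambda}$, where $f^{\lambda}$ is the number of standard Young tableaux of shape $\lambda$. One must then independently count the right cells. The cleanest route is to establish that the two-sided cells are indexed by partitions $\lambda\vdash n$ via the shape $\mathrm{sh}(P(w))$, and that the two-sided cell attached to $\lambda$ contains exactly $f^{\lambda}$ right cells; summing over $\lambda$ matches the two counts and forces each right cell to be a single $P$-fiber. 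This is where the real depth lies: passing from ``star operations preserve cells'' to ``star operations generate the entire right-cell equivalence'' requires either the classification of two-sided cells by shape together with the per-shape count (through Lusztig's $a$-function and the Springer correspondence), or, in a self-contained combinatorial form, the fact that Vogan's generalized right $\tau$-invariant (the closure of $\mathcal{L}$ under star operations) is a \emph{complete} invariant of right cells in type $A$ and determines $P(w)$.

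Finally, the left-cell analogue comes for free from the symmetry $w\mapsto w^{-1}$, which interchanges left and right cells and, under Robinson--Schensted, interchanges $P(w)$ and $Q(w)$; this yields $x\stackrel{L}{\sim}y\iff Q(x)=Q(y)$ and serves as a consistency check, since the unique common element of a given right cell and a given left cell then corresponds to a unique pair $(P,Q)$, matching the bijectivity of the Robinson--Schensted map.
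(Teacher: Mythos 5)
The paper gives no proof of this proposition at all --- it is quoted from the cited sources (Kazhdan--Lusztig's original paper and Ariki's exposition) --- so the only meaningful comparison is with the proofs in those references. Your outline does reconstruct their architecture faithfully: the implication $P(x)=P(y)\Rightarrow x\stackrel{R}{\sim}y$ via Knuth's theorem plus the identification of elementary Knuth moves with right star operations on two-element $\{s_i,s_{i+1}\}$-strings, and the converse via a completeness statement for a cell invariant. Your preliminary observation $\mathcal{L}(w)=\mathrm{Des}(w^{-1})=\mathrm{Des}(P(w))$ is also correct and is the right first consistency check.

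Two places need attention. First, as literally written, your justification of the easy direction is not a valid inference: $\mu(w,w^{*})=1$ by itself never forces $w$ and $w^{*}$ to dominate each other in $\leq_R$ (for instance $\mu(e,s_i)=1$, yet $\{e\}$ is a right cell by itself). The elementary relation generating $\leq_R$ requires, besides a nonzero $\mu$, the non-inclusion of \emph{right} descent sets in both directions, i.e.\ $\mathcal{R}(w)\not\subseteq\mathcal{R}(w^{*})$ and $\mathcal{R}(w^{*})\not\subseteq\mathcal{R}(w)$. This is exactly what the string structure supplies --- $w$ and $w^{*}$ each have precisely one of $s_i,s_{i+1}$ as a right descent, and different ones --- so the step is correct, but the descent condition, not the value of $\mu$, is the real point and must be stated. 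Second, in the hard direction, the route you call ``cleanest'' (two-sided cells indexed by shape, with exactly $f^{\lambda}$ right cells in the cell of shape $\lambda$) is the one most exposed to circularity: the usual derivations of that count in type $A$ either use the Robinson--Schensted description of cells itself or invoke machinery (Lusztig's $a$-function, Springer theory, Joseph's theory of primitive ideals) far heavier than the statement being proved. The self-contained path, and the one actually taken in the sources the paper cites, is your second alternative: show that the generalized $\tau$-invariant (descent sets iteratively propagated through star operations) is constant on right cells --- which follows from the same two facts as the easy direction --- and then prove, by a purely combinatorial induction, that it separates the fibers of $w\mapsto P(w)$. If your text is meant as a proof rather than a road map, that last induction is the one step you must actually carry out.
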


\begin{definition}
Suppose $x<y<z$. Then $w, \pi \in S_n$ differ by a Knuth relation of the first kind, if $$w=(w_1,...,y,x,z,...,x_n)  \text{~and~} \pi=(x_1,...,y,z,x,...,x_n) \text{~or vice versa}. $$
They differ by a Knuth relation of the second kind, if $$w=(w_1,...,x,z,y,...,x_n) \text{~and~} \pi=(x_1,...,z,x,y,...,x_n) \text{~or vice versa}. $$
The two elements are \textit{Knuth equivalent}, written $w \stackrel{K}{\sim}\pi$, if there is  a sequence of   elements in $S_n$
such that $w=\pi_1,...,\pi_N=\pi$ such that $\pi_i$ and $\pi_{i+1}$ differ  by a Knuth relation of the first kind or second kind for all $1\leq i\leq N-1$.
\end{definition}

\begin{prop}[ \cite{K}]
	For   two elements $x$ and $ y$ in $S_n$, we have $x \stackrel{K}{\sim}y$ if and only if $P(x)=P(y)$.
\end{prop}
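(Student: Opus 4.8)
The plan is to prove the two implications separately, in each case reducing to a statement about the row-insertion procedure. The basic tool I would set up first is that $\stackrel{K}{\sim}$ is a congruence for concatenation: if two words $u$ and $u'$ satisfy $u\stackrel{K}{\sim}u'$, then $aub\stackrel{K}{\sim}au'b$ for any words $a$ and $b$. This is immediate from the definition, since each elementary Knuth relation alters only three consecutive letters and hence remains a legal move after a common prefix $a$ and suffix $b$ are attached. This reduces everything below to the analysis of short words inserted into a fixed tableau.

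For the forward direction, that $x\stackrel{K}{\sim}y$ implies $P(x)=P(y)$, it suffices to treat a single elementary Knuth move, because $\stackrel{K}{\sim}$ is generated by such moves and equality of tableaux is transitive. So the key claim is that if $w$ and $\pi$ differ by one Knuth relation then $P(w)=P(\pi)$. Since insertion is performed letter by letter and the letters surrounding the modified triple agree in $w$ and $\pi$, I would fix the tableau $T$ obtained after inserting the common prefix and reduce to checking that inserting the triple in its two allowed orders (namely $yxz$ versus $yzx$ for the first kind, and $xzy$ versus $zxy$ for the second kind) produces the same tableau. I would prove this by induction on the number of rows of $T$: one tracks the positions of $x<y<z$ relative to the first row of $T$, determines which cells receive the three letters and which letter is bumped into the second row, and observes that in every case the letters bumped downward again differ by a Knuth relation of the same kind, so the inductive hypothesis applies to the insertion into the remaining rows. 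This casework is the technical heart of the argument and the step I expect to be the main obstacle.

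For the reverse direction I would pass through the row reading word. For a tableau $T$, let $r(T)$ denote the word read from the bottom row to the top row, each row left to right. The second ingredient is that every $w\in S_n$ satisfies $w\stackrel{K}{\sim}r(P(w))$. I would prove this by induction on the length of $w$: writing $w=w'a$ with $a$ the final letter, the inductive hypothesis $w'\stackrel{K}{\sim}r(P(w'))$ together with the congruence property gives $w=w'a\stackrel{K}{\sim}r(P(w'))\,a$, after which it remains to verify the single-letter step that appending $a$ to $r(T)$ is Knuth equivalent to the reading word of the tableau obtained by inserting $a$ into $T$. This last verification is a bumping analysis dual to the one above, carried out row by row along the bumping path. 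Granting these two ingredients, the theorem follows immediately: if $P(x)=P(y)=T$ then $x\stackrel{K}{\sim}r(T)$ and $y\stackrel{K}{\sim}r(T)$, hence $x\stackrel{K}{\sim}y$, while the forward direction gives the converse. The only genuinely delicate point is the row-by-row bumping bookkeeping, first for the two allowed orders of a Knuth triple and then for the single-letter append; once it is checked that bumping sends Knuth-related triples to Knuth-related triples, both inductions close without difficulty.
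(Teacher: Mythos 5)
The paper offers no proof of this proposition at all---it is quoted directly from Knuth \cite{K}---and your outline reproduces precisely the standard argument from that source (also presented in Sagan \cite{Sagan}): Knuth equivalence is a congruence for concatenation, a single elementary Knuth move leaves the insertion tableau unchanged (proved by row-by-row bumping induction), and every word is Knuth equivalent to the row word of its insertion tableau, whence $P(x)=P(y)$ gives $x\stackrel{K}{\sim}\pi_{P(x)}=\pi_{P(y)}\stackrel{K}{\sim}y$. Your proposal is correct and takes essentially the same route as the cited proof; the bumping casework you defer is exactly the technical content carried out in those references.
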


\begin{definition}
If $P$ is a Young tableau, then the \textit{row word} of $P$ is the permutation
$$\pi_P=R_NR_{N-1}\cdots R_1,$$
where $R_i$ is the $i$-th row of $P$.

The \textit{column word} of $P$ is the permutation
$$\pi^c_P=C^t_1C^t_{2}\cdots C^t_k,$$
where $C_i$ is the $i$-th column of $P$ and $C^t_i$ is the transpose of $C_i$ and is starting from the bottom of $C_i$.
\end{definition}

It is important to  {emphasize} that   {$P(\pi_P) = P(\pi^c_P) = P$}.

\begin{example}Suppose
	\[
	P(w)={\begin{tikzpicture}[scale=\domscale+0.1,baseline=-18pt]
	\hobox{0}{0}{1}
	\hobox{1}{0}{3}
	\hobox{0}{1}{2}
	\hobox{1}{1}{4}
	\hobox{2}{0}{5}
	\end{tikzpicture},}
	\]
then the corresponding row word is $\pi_P=(2,4,1,3,5)$ and column word is $\pi^c_P=(2,1,4,3,5)$.
\end{example}

The following propositition is very useful in the proof our main results.
\begin{prop}[{\cite[Theorem 3.5.3]{Sagan}}]\label{rowcolumn}
	Consider $\pi \in S_n $. The length of the longest increasing subsequence of $\pi$ is the length of the first row of $P(\pi)$.
	The length of the longest decreasing subsequence of $\pi$ is the length of the first column of $P(\pi)$.
\end{prop}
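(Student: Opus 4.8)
The plan is to prove the two assertions separately, handling the first row through the insertion mechanics and the first column by a dual argument. Throughout I write $\pi=(w_1,\dots,w_n)$ and let $P_k$ denote the tableau built from $(w_1,\dots,w_k)$, so that $P(\pi)=P_n$; recall that in this convention every row is strictly increasing from left to right and every column is strictly increasing from top to bottom, since all entries are distinct.

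For the first row, the key step I would isolate is the following bookkeeping lemma: \emph{when $w_k$ is inserted into $P_{k-1}$, the column of the first row at which $w_k$ is placed or performs its bump equals the length $\ell_k$ of the longest increasing subsequence of $(w_1,\dots,w_k)$ ending in $w_k$.} I would prove this by induction on $k$ via two inequalities. For ``$\geq$'', given an increasing subsequence of length $\ell$ ending at $w_k$, I apply the inductive hypothesis to its penultimate term and use that the entry sitting in a fixed column of the first row can only decrease as insertions proceed (a bump replaces an entry by a strictly smaller one); hence the entry in column $\ell-1$ at time $k$ is already smaller than $w_k$, forcing $w_k$ into a column $\geq \ell$. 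For ``$\leq$'', if $w_k$ enters column $c$ then column $c-1$ is untouched at step $k$ and holds some value $<w_k$ that was placed at an earlier step, and the inductive hypothesis supplies an increasing subsequence of length $c-1$ ending at that value, to which $w_k$ may be appended. Since the first row only grows and its final length is the largest column ever occupied, namely $\max_k \ell_k$, this equals the length of the longest increasing subsequence of $\pi$, which is the first assertion.

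For the first column I would run the dual of this argument along the insertion path rather than along the first row. The statement to establish is that \emph{the row in which the bumping chain initiated by $w_k$ terminates equals the length of the longest decreasing subsequence of $(w_1,\dots,w_k)$ ending at $w_k$}; granting this, the number of rows of $P(\pi)$, i.e. the length of the first column, is the maximum of these termination rows, hence the length of the longest decreasing subsequence of $\pi$. An alternative route, which avoids analyzing the full insertion path, is to reduce to the row statement just proved: reversal of the word turns increasing subsequences into decreasing ones, so it suffices to know that $P(\pi^{\mathrm{rev}})$ is the transpose of $P(\pi)$, because the first row of the transpose is exactly the first column of $P(\pi)$.

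I expect the first-column half to be the main obstacle. The row statement is natural precisely because the algorithm inserts into rows, making the monotonicity of a fixed first-row entry immediate; the column statement enjoys no such direct handle. In the insertion-path approach the difficulty is controlling how a single bump propagates downward and relating its terminal row to decreasing subsequences, which requires a careful invariant tracking the leftmost entries of successive rows. In the reversal approach the entire weight of the proof shifts onto the identity $P(\pi^{\mathrm{rev}})=P(\pi)^{t}$, a genuinely nontrivial symmetry of the Robinson--Schensted correspondence; I would either establish it through Knuth equivalence, using that reversing a word reflects its Knuth class together with the fact from the preceding proposition that the tableau is a complete invariant of that class, or simply cite it from \cite{Sagan}.
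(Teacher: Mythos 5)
The paper itself contains no proof of this proposition: it is Schensted's theorem, imported wholesale from \cite{Sagan}, so your proposal can only be measured against the standard textbook argument. Your first-row half \emph{is} that standard argument, and it is correct: the bookkeeping lemma that $w_k$ enters column $c$ of the first row if and only if the longest increasing subsequence of $(w_1,\dots,w_k)$ ending at $w_k$ has length $c$, proved by the two inequalities you describe (using that the entry occupying a fixed column of the first row can only decrease over time, and that an entry currently in the first row arrived there at the moment of its own insertion), after which one takes maxima over $k$. No issues there.

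The column half, however, contains a genuine error. The insertion-path lemma you propose is false, not merely delicate. Take $\pi=(3,1,2)$: after inserting $3$ and $1$ the tableau has first row $[1]$ and second row $[3]$, and inserting $w_3=2$ simply appends it to the first row, so the bumping chain terminates in row $1$; yet the longest decreasing subsequence of $(3,1,2)$ ending at $2$ is $(3,2)$, of length $2$. So the step-by-step equality fails, and with it the induction you outline; the two maxima over $k$ do coincide in the end, but that coincidence is exactly the theorem to be proved, so it cannot be invoked. No invariant tracking ``leftmost entries of successive rows'' will rescue this particular statement --- the terminal row of a single bumping chain simply does not compute the decreasing statistic. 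Your fallback route, reducing to the row statement via reversal and the symmetry $P(\pi^{\mathrm{rev}})=P(\pi)^{t}$, is the correct and standard one. Two caveats: (i) the Knuth-equivalence sketch proves less than you need --- checking that reversal exchanges the two kinds of Knuth relations shows only that reversal induces a well-defined involution on tableaux, and identifying that involution with transposition still requires evaluating it on a representative of each class, e.g.\ observing that, in the paper's conventions, the reverse of the row word of $P$ is precisely the column word of $P^{t}$, and that the column word of a tableau inserts back to that tableau; (ii) if you instead cite the symmetry $P(\pi^{\mathrm{rev}})=P(\pi)^{t}$ from \cite{Sagan}, your treatment of the column half is no more self-contained than the paper's, which cites \cite{Sagan} for the entire proposition.
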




\section{Smooth KL right cells}\label{smoothcell}
In this section, we will determine the KL right cells where all elements are smooth. Our result is based on a simple observation.
\begin{lem}\label{ii1}
	Let $\pi \in S_n$, for every $i$, the relative position of $i$ and $i+1$ remains unchanged under the action of the Knuth relation.
\end{lem}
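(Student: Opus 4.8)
The plan is to reduce the statement to a single elementary Knuth move and then identify precisely which pairs of values exchange their left-to-right order. Because Knuth equivalence is generated by the elementary relations of the first and second kind, and since a property of the form ``the relative position of $i$ and $i+1$ is unchanged'' is preserved along a chain of moves exactly when it is preserved by each single move, it suffices to examine one move of each kind. Such a move only rearranges the three letters inside a fixed three-position window; every entry outside the window is untouched and thus contributes no change of relative position.

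For a move of the first kind the window $yxz$ becomes $yzx$ with $x<y<z$. Comparing the three pairs of values before and after, the letter $y$ occupies the leftmost slot in both words, so the pairs $(y,x)$ and $(y,z)$ retain their order, and only $(x,z)$ is reversed. For a move of the second kind the window $xzy$ becomes $zxy$; now $y$ occupies the rightmost slot in both words, so the pairs $(x,y)$ and $(z,y)$ retain their order, and again the unique reversed pair is $(x,z)$.

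The decisive point is that the reversed pair $(x,z)$ can never consist of two consecutive integers, since $x<y<z$ forces $z-x\geq 2$. Consequently, for every $i$ the values $i$ and $i+1$ are never the moving pair $\{x,z\}$, and their relative position survives the move. I would also spell out the one case that looks borderline, namely when a consecutive pair contains the stationary letter $y$ (so that $\{x,y\}=\{y-1,y\}$ or $\{y,z\}=\{y,y+1\}$): here the order is preserved precisely because $y$ does not move and its partner stays on the same side of $y$ after the swap. I expect no real obstacle beyond this bookkeeping; the whole content of the lemma is the simple observation that a Knuth move interchanges only a pair of values that straddle a third, strictly intermediate value, and such a pair is automatically non-consecutive.
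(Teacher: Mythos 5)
Your proposal is correct and is essentially the paper's own argument, made rigorous: the paper's proof says in one line that swapping two letters under a Knuth move requires a strictly intermediate ``middle number,'' which $i$ and $i+1$ lack, and your case analysis of the two elementary moves (showing the only reversed pair is $\{x,z\}$ with $z-x\geq 2$) is exactly that observation spelled out. No gap; your version is just more detailed than the paper's.
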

\begin{proof}
	Based on the definition of the Knuth relation, if we want to change the position of two elements, we need the help of the middle number. But for {$i$} and  {$i+1$}, there is no such middle number. Thus, the lemma holds.
\end{proof}


 Before we give the proof of our result,  we discuss some special cases.

 \begin{prop}\label{two}
 	Let P be a Young tableau. The shape of P is $[2,1,\dots,1]=[2,1^{n-2}]$. Let $\pi_P=(n,\dots,k+1,k-1,\dots,2,1,k)$ be  the row word of $P$. Then the right cell corresponding to the permutation $\pi_P$ will be one of the   following two types:
 	\begin{enumerate}
 		\item All elements in the right cell corresponding to $\pi_P=(n,\dots,3,1,2)$ or $\pi_P=(n-1,\dots,2,1,n)$ will  be smooth;
 		
 		\item The right cell contains smooth and non-smooth elements for $2<k<n$.
 	\end{enumerate}
 \end{prop}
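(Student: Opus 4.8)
The plan is to combine the rigidity supplied by Lemma~\ref{ii1} with the pattern criterion of \cite{LS}, using throughout that the right cell of $\pi_P$ is exactly its Knuth class $\{w\mid P(w)=P\}$. First I would record $P$ explicitly: it has first row $(1,k)$ and first column $(1,2,\dots,k-1,k+1,\dots,n)$, so its shape is the hook $(2,1^{n-2})$ and, by Proposition~\ref{rowcolumn}, every member of the cell has longest increasing subsequence of length $2$. The real tool, however, is Lemma~\ref{ii1}: the relative order of $i$ and $i+1$ is the same for all members of the cell, so it may be read off once and for all from $\pi_P=(n,\dots,k+1,k-1,\dots,1,k)$. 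One sees that the \emph{only} ascent among consecutive values is $k-1$ before $k$, while for every $i\neq k-1$ the value $i+1$ precedes $i$. Chaining these relations, I obtain that in \emph{every} $w$ in the cell the values $\{k,\dots,n\}$ appear in the decreasing order $n,\dots,k$ and the values $\{1,\dots,k-1\}$ appear in the decreasing order $k-1,\dots,1$.

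The second ingredient is an elementary smoothness criterion that I would isolate as a sublemma: \emph{if deleting a single entry of $w$ leaves a strictly decreasing word, and that entry is either the value $1$, the value $n$, or the last letter $w_n$, then $w$ avoids $3412$ and $4231$.} The proof is a short case check. If the distinguished entry sits at position $t$, then after its deletion the remaining word is decreasing, so every ascending pair of $w$ must use position $t$. A $3412$-occurrence contains two \emph{disjoint} ascending pairs, which cannot both use the single position $t$. A $4231$-occurrence contains the middle ascending pair $(p_2,p_3)$, forcing $t\in\{p_2,p_3\}$; the entry at $t$ is then a pattern value lying strictly between two other chosen values, which is impossible if that value is $1$ or $n$, while if $t$ is the last position there is no room for the later pattern letter. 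The hard part will be getting the hypotheses of this sublemma exactly right: the naive statement ``decreasing after one deletion'' is \emph{false}, since $(5,3,4,2,1)$ becomes decreasing after deleting $3$ yet contains $4231$, so one genuinely needs the deleted entry to be extremal in value or in position.

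For part (1) with $k=2$, the chaining of the first paragraph gives that $\{2,\dots,n\}$ occurs decreasingly in every $w$ of the cell, i.e.\ $w$ minus the value $1$ is decreasing; the sublemma (deletion of the value $1$) then shows every such $w$ is smooth. The case $k=n$ is identical with the value $n$ deleted, since then $\{1,\dots,n-1\}$ occurs decreasingly. This settles both cells listed in (1).

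For part (2), with $2<k<n$, I would exhibit one element of each type. The row word $\pi_P$ is smooth: deleting its last letter $k$ leaves $(n,\dots,k+1,k-1,\dots,1)$, which is decreasing, so the sublemma applies. For a non-smooth member I stay inside the cell by a single Knuth move: since $k>2$ the last three letters of $\pi_P$ are $2,1,k$ with $1<2<k$, and a Knuth relation of the first kind replaces them by $2,k,1$, so $\pi_P\stackrel{K}{\sim}\pi_P'=(n,\dots,k+1,k-1,\dots,2,k,1)$ and hence $P(\pi_P')=P$ by \cite{K}. Because $k<n$, in $\pi_P'$ the letter $n$ precedes $k-1$, which precedes $k$, which precedes $1$, so $(n,k-1,k,1)$ occurs in the relative order $4231$; thus $\pi_P'$ is non-smooth and the cell contains both kinds of elements. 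The only real obstacle is the sublemma of the second paragraph; once it is correctly stated, all the rigidity for part (1) comes for free from Lemma~\ref{ii1}, and the sole computation in part (2) is the single Knuth move, which by design keeps the witness in the right cell.
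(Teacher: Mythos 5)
Your proposal is correct, and its skeleton coincides with the paper's: both rest on Lemma~\ref{ii1} to pin down the relative order of consecutive values across the whole cell, both invoke the Lakshmibai--Sandhya criterion, and for part (2) you use the \emph{identical} witness — the single Knuth move $(\dots,2,1,k)\stackrel{K}{\sim}(\dots,2,k,1)$, which creates a $4231$ pattern (the paper exhibits it as $(k+1,k-1,k,1)$, you as $(n,k-1,k,1)$; both are valid since $2<k<n$). Where you genuinely depart from the paper is in how part (1) is verified. The paper argues directly and rather tersely about where the letters of a hypothetical $3412$ or $4231$ occurrence could sit, quoting Lemma~\ref{ii1} to rule out each placement; it also merely asserts, without proof, that $\pi_P$ itself avoids both patterns in case (2). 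You instead first chain the consecutive-value relations to get a complete structural description of \emph{every} element of the cell (the values $\{k,\dots,n\}$ and $\{1,\dots,k-1\}$ each occur in decreasing order), and then route all avoidance checks through one self-contained sublemma: a permutation that becomes strictly decreasing after deleting a single entry avoids both patterns, \emph{provided} the deleted entry is the value $1$, the value $n$, or the last letter. That sublemma is not in the paper, and your observation that the extremality hypothesis cannot be dropped (the example $(5,3,4,2,1)$ contains $4231$ yet is decreasing after deleting $3$) is exactly the point where a careless version of this argument would fail. The trade-off: the paper's treatment is shorter, while yours is more rigorous and uniform — the same sublemma disposes of both cells in part (1) and also supplies the smoothness of $\pi_P$ in part (2), closing a gap the paper leaves implicit.
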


 \begin{proof} We use $[n,m]$ to denote the set $\{n,n+1,...,m\}$ for $n<m$. We use $``k"$ to denote the $k$-th largest element in the pattern $4231$ and the pattern $3412$ for a given permutation $\pi \in S_n$.
 	
 	For case one, we assert that the pattern $3412$ cannot appear.  {To justify this, consider the permutation $\pi_P = (n,\dots,3,1,2)$.
    Here, the element ``3" must be chosen from $[3,n]$ (since 1 and 2 are fixed at the end), but Lemma~\ref{ii1} enforces that any larger element ``4" must precede ``3" in $\pi_P$.
    Consequently, the relative order of ``4" and ``3" makes the $3412$ pattern impossible. }
    For the pattern $4231$, if we consider $\pi_P=(n,\dots,3,1,2)$, you will find that we can only take $``2"$ from $[2, n]$. But Lemma \ref{ii1} implies possible $``3"$ is before $``2"$, which implies that the pattern $4231$ will not appear. If we consider $\pi_P=(n-1,\dots,2,1,n)$, the discussion is similar.
     {Here we give the full list of elements in the right cell of $(n,\dots,3,1,2)$ and $(n-1,\dots,2,1,n)$ in the following Table \ref{el-cell}.}
    \begin{table}[H]
    \centering
    \footnotesize{\renewcommand{\arraystretch}{1.4} 
    \begin{tabular}{|c|l|}
    \hline
    Permutation & Elements in the right cell \\
    \hline
    $(n, \dots, 3, 1, 2)$
    & $(n, \dots, 3, 1, 2),(n, \dots, 4, 1, 3,  2), \dots, (1, n, n-1, \dots, 3, 2)$ \\
    \hline
    $(n-1, \dots, 2, 1, n)$
    & $(n-1, \dots, 2, 1, n), (n-1, \dots, 2, n, 1),\dots, (n-1, n, n-2, \dots, 2, 1)$ \\
    \hline
    \end{tabular}}
    \caption{List of elements in the right cell for the first case.}\label{el-cell}
    \end{table}

 	For the case $(2)$, we find that there are no pattern $3412$ and $4231$ in the initial permutation. But after some Knuth relation, the pattern $4231$ will appear:
 	$$
 	\pi_P=(n-1,\dots,k+1,k-1,\dots,2,1,k)\stackrel{K}{\sim}(n-1,\dots,k+1,k-1,\dots,2,k,1)
 	$$
 since $2<k<n$.	
 	Now, $(k+1,k-1,k,1)$ will satisfy the pattern $4231$.
 \end{proof}

\begin{prop}
	Let $P$ be a Young tableau. The shape of $P$ is $[n-1,1]$. Then the elements in the right cells are all smooth.
\end{prop}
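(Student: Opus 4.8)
The plan is to reduce the statement, via the Robinson--Schensted correspondence and the Lakshmibai--Sandhya criterion, to a single numerical observation about longest increasing subsequences. Recall that a right cell consists of all $w\in S_n$ with $P(w)$ equal to one fixed tableau, so every element of the cell has the same shape $(n-1,1)$. By Proposition~\ref{rowcolumn}, the length of the longest increasing subsequence of such a $w$ equals the length of the first row, namely $n-1$, while the length of the longest decreasing subsequence equals the length of the first column, namely $2$. In fact, since $\lambda_1=n-1$ forces $\lambda=(n-1,1)$ for any partition $\lambda$ of $n$, having longest increasing subsequence exactly $n-1$ is \emph{equivalent} to having shape $(n-1,1)$. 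By the smoothness criterion of \cite{LS}, it then suffices to show that every such $w$ avoids both of the patterns $3412$ and $4231$.

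First I would isolate the key observation: each of the two forbidden patterns, read as a permutation in $S_4$, has longest increasing subsequence of length exactly $2$ (for $3412$ the longest increasing subsequences are $3,4$ and $1,2$; for $4231$ it is $2,3$). Suppose, for contradiction, that some $w$ in the cell contains one of these patterns, occurring at positions $i<j<k<l$. Intersecting any increasing subsequence of $w$ with the four positions $\{i,j,k,l\}$ yields an increasing subsequence supported on them, which therefore has length at most $2$. Hence any increasing subsequence of $w$ has length at most $(n-4)+2=n-2$, contradicting the fact that the longest increasing subsequence of $w$ has length $n-1$. Therefore $w$ contains neither $3412$ nor $4231$, so $X_w$ is smooth.

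Since the argument depends only on the shape $(n-1,1)$, it applies simultaneously to every $w$ in the cell, which gives the proposition. (For $4231$ one could argue more directly, since it contains a decreasing subsequence of length $3$, e.g.\ $4,2,1$, which is impossible when the longest decreasing subsequence of $w$ has length $2$; but the uniform increasing-subsequence count disposes of both patterns at once.) The one genuine step is recognizing that pattern containment can be reformulated through longest increasing subsequences: once the numerical fact that both patterns have longest increasing subsequence $2$ is noted, the counting is immediate and I do not expect any serious obstacle.
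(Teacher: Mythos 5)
Your proof is correct, and it takes a cleaner route than the paper's. The paper works with the explicit row word $\pi_P=(k,1,2,\dots,k-1,k+1,\dots,n)$ and treats the two patterns asymmetrically: it kills $4231$ by noting it contains the decreasing subsequence $(4,2,1)$ of length $3$, which is impossible when the first column has length $2$ (the remark you make parenthetically), and it kills $3412$ by a case analysis on which value ranges could supply the letter ``$3$'', invoking the longest-increasing-subsequence bound in each case. Your argument replaces all of this with a single uniform counting step: both forbidden patterns, as elements of $S_4$, have longest increasing subsequence exactly $2$, so an occurrence at four positions forces every increasing subsequence of $w$ to have length at most $(n-4)+2=n-2$, contradicting the cell-invariant value $n-1$ coming from the first row. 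What this buys you is twofold: the two patterns are handled at once, and the argument visibly depends only on the shape $(n-1,1)$ (a cell invariant via Robinson--Schensted), so it applies to every element of the cell without any reference to a representative --- a point that in the paper's $3412$ analysis requires the reader to check that the value-range reasoning transfers from $\pi_P$ to arbitrary cell members. The paper's treatment of $4231$ is marginally quicker for that one pattern, but your general observation (containing a pattern whose longest increasing subsequence is $m$ caps the longest increasing subsequence of $w$ at $(n-4)+m$) is the more reusable tool.
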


\begin{proof}
	Let $\pi_P=(k,1,2,\dots,k-1,k+1,\dots,n)$ for some $2\leq k\leq n$.
	
	For the pattern $4231$, note that it has a subsequence $(4,2,1)$. So by Proposition \ref{rowcolumn}, the length of the first column of $P(\pi_P)$ is larger than $2$, which is a contradiction.
	
	For the pattern $3412$, if $``3"$ is taken from $[1, k-1]$, then we can't find $``12"$. If $``3"$ is taken from $[k, n]$, we will take $``12"$ from $[1, k-1]$. But now the length of the longest increasing subsequence is less than before, which is a contradiction.
	
	Thus, the pattern $4231$ and $3412$ can not appear in any element of any KL right cell corresponding to some $\pi_P=(k,1,2,\dots,k-1,k+1,\dots,n)$.
\end{proof}

\begin{Rem}
	From the above two propositions, we can see that we cannot simply use symmetry to make inferences. The nature of $P$ and $P^t$ may be completely different.
\end{Rem}
 Now we give the theorem:

\begin{Thm}\label{main}
Let $n\in \mathbb{Z}_{>0}$.
Suppose $P$ is a standard Young tableau
with $n$ boxes. Then its corresponding KL right cell  consists entirely of smooth elements if and
only if $P$ is one of the followings:

\begin{enumerate}
	\item When the shape of $P$ is $[n]$ or $[1,\dots,1]=[1^n]$, we will have:
	\[
	P={
	\begin{tikzpicture}[scale=\domscale+0.1,baseline=-13pt]
	\hobox{0}{0}{1}
	\hobox{1}{0}{2}
	\hobox{2}{0}{\dots}
	\hobox{3}{0}{n}
	\end{tikzpicture}}
	\] or
	\[
	P={	
	\begin{tikzpicture}[scale=\domscale+0.1,baseline=-36pt]
	\hobox{0}{0}{1}
	\hobox{0}{1}{2}
	\hobox{0}{2}{\vdots}
	\hobox{0}{3}{n}
	\end{tikzpicture}};
	\]
	\item When the shape of $P$ is $[2,1,\dots,1]=[2,1^{n-1}]$ or $[n-1,1]$, we will have:
	\[
	P=	{
	\begin{tikzpicture}[scale=\domscale+0.1,baseline=-38pt]
	\hobox{0}{0}{1}
	\hobox{1}{0}{2}
	\hobox{0}{1}{3}
	\hobox{0}{2}{\vdots}
	\hobox{0}{3}{n}
	\end{tikzpicture}}
	\]or
	 \[
	P=	\scriptsize{
	\begin{tikzpicture}[scale=\domscale+0.3,baseline=-47pt]
	\hobox{0}{0}{1}
	\hobox{1}{0}{n}
	\hobox{0}{1}{2}
	\hobox{0}{2}{\vdots}
	\hobox{0}{3}{n-1}
	\end{tikzpicture}}
	\]	
	or
	\[
	P=\scriptsize{	
	\begin{tikzpicture}[scale=\domscale+0.3,baseline=-25pt]
	\hobox{0}{0}{1}
	\hobox{0}{1}{k}
	\hobox{1}{0}{2}
	\hobox{2}{0}{\dots}
	\hobox{3}{0}{k-1}
	\hobox{4}{0}{k+1}
	\hobox{5}{0}{\dots}
	\hobox{6}{0}{n}
	\end{tikzpicture}}
	\]
     {with  $k\in \{ 2, \dots, n\}$;}
	
	\item 	When the shape of $P$ is $[k,1,\dots,1]=[k,1^{n-k}]$ for some $2<k<n-1$ and the length of the first column is larger than $2$, we will have:
	\[
	P=\scriptsize{	
	\begin{tikzpicture}[scale=\domscale+0.3,baseline=-47pt]
	\hobox{0}{0}{1}
	\hobox{0}{1}{2}
	\hobox{0}{2}{\vdots}
	\hobox{0}{3}{i}
	\hobox{1}{0}{i+1}
	\hobox{2}{0}{\dots}
	\hobox{3}{0}{n}
	\end{tikzpicture}}
	\]
	or	
	\[
	P=\scriptsize{	
	\begin{tikzpicture}[scale=\domscale+0.3,baseline=-47pt]
	\hobox{0}{0}{1}
	\hobox{1}{0}{2}
	\hobox{2}{0}{\dots}
	\hobox{3}{0}{i}
	\hobox{0}{1}{i+1}
	\hobox{0}{2}{\vdots}
	\hobox{0}{3}{n}
	\end{tikzpicture}}
	\]
	 {with $i \in \{ 3, \dots, n-2\}$}, or	
	\[
	P=\scriptsize{	
	\begin{tikzpicture}[scale=\domscale+0.3,baseline=-47pt]
	\hobox{0}{0}{1}
	\hobox{1}{0}{2}
	\hobox{2}{0}{\dots}
	\hobox{3}{0}{i}
	\hobox{4}{0}{l}
	\hobox{5}{0}{\dots}
	\hobox{6}{0}{n}
	\hobox{0}{1}{i+1}
	\hobox{0}{2}{\vdots}
	\hobox{0}{3}{l-1}
	\end{tikzpicture}}
	\]
     {with $l \in \{ 5, \dots, n\}$ and $2\leq i\leq l-3 $};
	
	\item When the shape of $P$ is $[n-2,2]$, we will have:
	\[
	P=\scriptsize{	
	\begin{tikzpicture}[scale=\domscale+0.3,baseline=-25pt]
	\hobox{0}{0}{1}
	\hobox{1}{0}{3}
	\hobox{2}{0}{\dots}
	\hobox{3}{0}{k-1}
	\hobox{4}{0}{k+1}
	\hobox{5}{0}{\dots}
	\hobox{6}{0}{n}
	\hobox{0}{1}{2}
	\hobox{1}{1}{k}
	\end{tikzpicture}}
	\]
     {with $k \in \{ 4, \dots, n\}$}.
\end{enumerate}	
\end{Thm}

\begin{proof} {We give the arguments by considering the lengths of rows and columns of the corresponding Young tableaux.}

$(\textbf{Step A})$. First, we consider a Young tableau with only one row or one column, which will be the Young tableau in case $(1)$.	It is obvious that the elements of right cells corresponding to $\pi_P=(1,2,\dots,n)$ and $\pi_P=(n,n-1,\dots,1)$ are smooth, since we can't find any Knuth relations in those permutations.

$(\textbf{Step B})$.
Now, we consider Young tableaux with only one row and one column whose lengths are both larger than one. Since we have discussed  case $(2)$ in Proposition \ref{two}, we only focus on the tableaux in case $(3)$, whose lengths of the first row and
first column are both larger than $2$.

$(\textbf{Step C})$. Suppose $P$ is a Young tableau in case $(3)$. Let  $\pi_P$ be  denoted as  {$(a_{n-k}, \dots, a_2, 1, b_2, \dots, b_k)$ in which $a_2 =2$ or $b_2 = 2$,  $a_{n-k}>a_{n-k-1}>\cdots >a_2$ and $b_{2}<b_3<\cdots <b_k$. In the following, we set $a = a_2$ and $b = b_2$. Note that only the triple $(a, 1, b)$ gives rise to a Knuth relation in $\pi_P$}.

\begin{enumerate}[leftmargin=18pt]
    \item[1)] If $a=2<b$, we will have $\pi_P\stackrel{K}{\sim}  {(a_{n-k}, \dots, a, b, 1, \dots, b_k)}$. If the leftmost element, denoted as $c = a_{n-k}$, of $\pi_P$ is larger than $b$, $   (c,a,b,1)$ will be the pattern $4231$. So we assume that $c<b$, then we have $$\pi_P=(i,i-1,\dots,2,1,i+1,\dots,n),$$
    where $b=i+1$ and $3\leq i\leq n-2$. We claim that the elements of the right cell will avoid the pattern $3412$ and $4231$.

First, we consider the pattern $3412$. If $``3"$ is taken from $[3, i]$. By Lemma \ref{ii1} , we note that $``4"$ can only be taken from $[i+1, n]$ and $``1", ``2"$ can only be taken from $[1, ``3"-1]$. However, by Lemma \ref{ii1}, $``2"$ must be before $``1"$. It is absurd. If $``3"$ is taken from $[i+1, n]$, the discussion is the same. When we consider the pattern $4231$, we find that $``2"$ must be taken from $[1,i]$ and $``3"$ must be taken from $[i+1,n]$. Using Lemma \ref{ii1}, we note that $``4"$ must be after $``3"$, which is a contradiction. So all elements of this right cell will avoid the pattern $3412$ and $4231$.
\item[2)]  {Now we assume that $a > b=2$. Recall that the lengths of the first row and first column are both larger than $2$. Now we denote the right element of $b$ as $d$ and write $\pi_P$ as $$(a_{n-k},\dots,a_3, a, 1, 2, d, b_4, \dots, b_k).$$}

\begin{enumerate}[leftmargin=18pt]
    \item[i)]If  $a>d$, we have the follows.
\begin{enumerate}[leftmargin=18pt]
    \item[a)] If $a$ is larger than the last element of $\pi_P$, then we have $$\pi_P=(n,\dots,i+1,1,2,\dots,i).$$  We claim that the elements of the right cell are all smooth. For the pattern $3412$, if we take $``3"$ from $[i+1,n]$, we can not find $``4"$. If we take $``3"$ from $[1,i]$, we can not find $``2"$. As for the pattern $4231$, we know that $``4"$ can only be taken from $[i+1,n]$. Then if $``2"$ is also taken from $[i+1,n]$, we can't find $``3"$. If $``2"$ is taken from $[1,i]$, we can't find $``1"$.
\item[b)]If there exists an element in the first row of $P$ larger than $a$,  {denoted the first one as $l$ and the leftmost element $c$ of $\pi_P$ will be larger than $l$}, we can write $$\pi_P=(c,\dots,a,1,2,3,\dots,a-1, {l},\dots).$$ Using the Knuth relations,  we have:
\begin{align*}
(c,\dots,a,1,2,3,\dots,a-1, {l},\dots)&\stackrel{K}{\sim}(c,\dots,1,a,2,3,\dots,a-1, {l},\dots) \\
&\stackrel{K}{\sim}(c,\dots,1,2,a,3,\dots,a-1, {l},\dots) \\
&\stackrel{K}{\sim}(\dots) \\
&\stackrel{K}{\sim}(c,\dots,1,2,3,\dots,a,a-1, {l},\dots )\\
&\stackrel{K}{\sim}(c,\dots,1,2,3,\dots,a, {l},a-1\dots ).
\end{align*}
We find that $(c,a, {l},a-1)$ has the pattern $4231$, which implies that the right cell has a non-smooth element.
\item[c)]If the leftmost element is smaller than  {$l$}, we can write $$\pi_P=(  {l-1},\dots,i+1,1,2,3,\dots,i,l,
\dots).$$ For the pattern $3412$, if $``3"$ is taken from $[1,i]$, we can't find $``1"$ and $``2"$. If $``3"$ is taken from $[i+1,l-1]$, $``4"$ must be taken from $[l,n]$, then $``1"$ and $``2"$ must be taken from $[1,i]$. But the length of the longest increasing subsequence will decrease, which is a  contradiction. If $``3"$ is taken from $[l,n]$, the contradiction is the same.
For the pattern $4231$, if we take $``4"$ from $[i+1,l-1]$, then if we take $``2"$ from $[i+1,l-1]$, $``3"$ will have no choice. If we take $``2"$ from $[1,i]$, $``1"$ will have no choice. If we take $``4"$ from $[1,i]$,  $``1"$ and $``2"$ will have no choices. If we take $``4"$ from $[l,n]$, we  find that $``3"$ will have no choice.
\end{enumerate}


\item[ii)] Now we assume that $a<d$. By using the Knuth relation, we can  change $\pi_P$ into $$(a_{n-k},\dots,a_3, 1, a, d, 2,  b_4, \dots, b_k).$$ If  $a_{n-k}>d$, then $(a_{n-k},a,d,2)$ will be the pattern $4231$. So we assume  $a_{n-k}<d$, then  {$$\pi_P =(l-1,\dots,3,1,2,l,\dots,n)$$ for some $l>3$. Note that this $\pi_P$ corresponds to a special case of the third tableau in case $(3)$, i.e., $i=2$.}  First we claim that there is no pattern like $3412$. Observing $\pi_P$, we find that the longest increasing subsequence is $(1,2,l,\dots,n)$. But if we have the pattern $3412$, we find that $``1",``2"$ must be $1,2$ and $``4"$ must be taken from $[l,n]$. But when the pattern exists, the length of the longest increasing subsequence of the  permutation $\pi_P$ is less than $n-l+3$, which is a contraction. As for the pattern $4231$, we can easily find a contradiction in the relative position between $``2"$ and $``3"$.
\end{enumerate}
\end{enumerate}

$(\textbf{Step D})$.  Now we consider Young tableaux with multiple rows and columns whose lengths are larger than one.  We find that if the given Young tableau $P$ has more than three rows whose lengths are larger than one, the pattern $4231$ will appear  {in $\pi_P$}. And if the length of the second row is larger than $2$, the pattern $3412$ will also appear  {in $\pi_P$}.  {Now
it will be assumed that the second row has length 2 and each row below
the second row (if there is any) has length 1.} The first element of the second row, denoted it by $b$, must be $2$. Otherwise, we will find $b,c,1,a $ will satisfy the pattern $3412$. Thus, the Young tableau corresponding to $\pi_P$ and avoiding the pattern $3412$ and $4231$ must be  like:
\[
P(w)=\begin{tikzpicture}[scale=\domscale+0.1,baseline=-30pt]
\hobox{0}{0}{1}
\hobox{1}{0}{a}
\hobox{0}{1}{2}
\hobox{1}{1}{c}
\hobox{1}{0}{}
\hobox{2}{0}{\dots}
\hobox{0}{2}{\vdots}
\end{tikzpicture}.
\]

If there exists some element larger than $c$ in the first column, $\pi_P$ will have the pattern $4231$. So we can assume the elements in the first column are smaller than $c$.  {Moreover, if there is an entry larger than $a$ in the first column, then the pattern $3412$ appears in $\pi_P$. As a result, we can assume the entries of the first column are $1, 2, \dots, a-1$}.
\begin{enumerate}[leftmargin=18pt]
    \item[1)]If $P$ has only two rows, the corresponding tableau will be  like:

\[
P=\begin{tikzpicture}[scale=\domscale+0.1,baseline=-20pt]
\hobox{0}{0}{1}
\hobox{1}{0}{3}
\hobox{0}{1}{2}
\hobox{1}{1}{k}
\hobox{2}{0}{\dots}
\end{tikzpicture},
\]
and $\pi_P=(2,k,1,3,\dots)$.  For the pattern $3412$, if we take $``4"$ from $[3,k-1]$, we can't find the correct $``2"$. If we take $``4"$ from $[k+1,n]$, denoted it as $l$, then $``2"$ must be taken from $[3,k-1]$, denoted it as $j$. Now the length of the longest increasing subsequence is actually less than before, which is  a contradiction.  For  the pattern $4231$, since the length of the first column is $2$,  there will be no longer decreasing subsequence. So it avoids the pattern $4231$.
\item[2)]The last case we need to consider is:
\[
P=\scriptsize{\begin{tikzpicture}[scale=\domscale+0.3,baseline=-48pt]
\hobox{0}{0}{1}
\hobox{0}{1}{2}
\hobox{0}{2}{\vdots}
\hobox{0}{3}{k}
\hobox{1}{0}{k+1}
\hobox{1}{1}{j}
\hobox{2}{0}{\dots}
\hobox{3}{0}{n}
\end{tikzpicture}}
\]

Now $\pi_P=(k,k-1,\dots,3,2,j,1,k+1,\dots,j-1,j+1,\dots,n)$. We can easily find that
\begin{align*}
\pi_P=&(k,k-1,\dots,3,2,j,1,k+1,\dots,j-1,j+1,\dots,n)\\
\stackrel{K}{\sim}&(k,k-1,\dots,3,j,2,1,k+1,\dots,j-1,j+1,\dots,n)\\
\stackrel{K}{\sim}&(k,k-1,\dots,3,j,2, k+1,1,\dots,j-1,j+1,\dots,n).
\end{align*}
So $(j,2,k+1,1)$ satisfies the pattern $4231$. Thus $P$ contains  some non-smooth element.
\end{enumerate}

\end{proof}

We call a KL right cell in the above theorem a \emph{smooth KL right cell}.
If we use $\mathcal{C}_R(P)$ to denote the corresponding KL right cell for a given Young tableau $P$, we will have the following.
\begin{Cor}A KL right cell $\mathcal{C}_R(P)$ is a smooth cell if and only if its column word $\pi_P^c$ is  one of the following elements:
\begin{itemize}
    \item $x_1=(1,2,...,n)$;
    \item $x_2=(n,n-1,...,1)$;
    \item $x_3=(n,n-1,...,3,1,2)$;
    \item $x_4=(n-1,n-2,...,1,n)$;
    \item $y_k=(k,1,2,...,k-1,k+1,...,n)$, $2\leq k\leq n$;
    \item $z_i=(i,i-1,...,1,i+1,i+2,...,n)$, $3\leq i\leq n-2$;
    \item $z^{t}_i=(n,n-1,...,i+1,1,2,3,...,i)$, $3\leq i\leq n-2$;
    \item $s_{ik}=(l-1,l-2,\dots,i+1,1,2,\dots,i,l,l+1,\dots,n)$, $2\leq i\leq l-3\leq n-3$;
    \item $t_k=(2,1,k,3,4,...,k-1,k+1,...,n)$, $4\leq k\leq n$.

\end{itemize}
\end{Cor}

\section{ {Examples of KL right cells with only non-smooth elements}}\label{nonsmoothcell}

By Lemma \ref{ii1}, we can find some special KL right cells with all elements being  non-smooth. In general,  we prove that the permutations corresponding to these tableaux have some invariant subsequences under the action of the Knuth relations to illustrate their non-smoothness.

\subsection{Invariant subsequences with the pattern $3412$}
\begin{prop}
For $k \ge 3$, we have the fact that all elements of the following KL right cells are non-smooth:
\[
P=\scriptsize{\begin{tikzpicture}[scale=\domscale+0.3,baseline=-25pt]
\hobox{0}{0}{1}
\hobox{1}{0}{2}
\hobox{2}{0}{\dots}
\hobox{3}{0}{k}
\hobox{0}{1}{k+1}
\hobox{1}{1}{k+2}
\hobox{2}{1}{\dots}
\hobox{3}{1}{2k}
\end{tikzpicture}}
\]
and
\[
P={\scriptsize{ \begin{tikzpicture}[scale=\domscale+0.3,baseline=-26pt]
\hobox{0}{0}{1}
\hobox{1}{0}{2}
\hobox{2}{0}{\dots}
\hobox{3}{0}{k}
\hobox{4}{0}{k+1}
\hobox{0}{1}{k+2}
\hobox{1}{1}{k+3}
\hobox{2}{1}{\dots}
\hobox{3}{1}{2k+1}
\end{tikzpicture}}}.
\]

Actually, $(k+1,k+2,k-1,k)$ and $(k+2,k+3,k,k+1)$ are respectively the invariant subsequences.
\end{prop}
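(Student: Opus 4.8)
The plan is to use the smoothness criterion of \cite{LS}: an element is smooth iff it avoids both $3412$ and $4231$. For the first tableau the row word is $\pi_P=(k+1,k+2,\dots,2k,1,2,\dots,k)$, and its subsequence on the four consecutive values $k-1,k,k+1,k+2$ is $(k+1,k+2,k-1,k)$, which realizes the pattern $3412$. In both tableaux the first column has length $2$, so by Proposition \ref{rowcolumn} no element of the cell admits a decreasing subsequence of length $3$; in particular no element can contain $4231$ (which carries the decreasing subsequence $4,2,1$). Hence the whole burden is to show that \emph{every} element of the cell contains $3412$, and for this it suffices to prove that the four values $k-1,k,k+1,k+2$ retain their relative order $3412$ throughout the Knuth class.

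First I would pin down the order via Lemma \ref{ii1}. Applied to the three consecutive pairs it is immediate and Knuth-invariant that $k-1$ precedes $k$, that $k+1$ precedes $k+2$, and that $k+1$ precedes $k$. These three relations do \emph{not} by themselves force $3412$: the linear orders compatible with them are $1324,1342,3124,3142,3412$, and only the last is desired. Comparing $3412$ with the other four, what remains is exactly that \emph{both} $k+1$ and $k+2$ occur before $k-1$; together with the three relations above this gives, in positional order, the chain $k+1<k+2<k-1<k$, i.e. the pattern $3412$. The second tableau, of shape $(k+1,k)$, is handled by the identical argument with the quadruple $k-1,k,k+1,k+2$ replaced by $k,k+1,k+2,k+3$, whose invariant order is $(k+2,k+3,k,k+1)$.

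The main obstacle is precisely these two ``crossing'' relations $k+1<k-1$ and $k+2<k-1$, which lie beyond Lemma \ref{ii1} because $\{k-1,k+1\}$ and $\{k-1,k+2\}$ are not consecutive. That genuine extra input is required here is already visible at $k=2$, where the statement fails: the permutation $3142=(k+1,k-1,k+2,k)$ shares the insertion tableau of $3412$ but avoids both forbidden patterns, hence is smooth. Thus the hypothesis $k\ge 3$ must enter essentially, and I would exploit it through the surrounding values $k-2$ and $k+3$, which exist in $[1,2k]$ precisely when $k\ge 3$ and which Lemma \ref{ii1} pins to $k-1$ and $k+2$. Concretely I would argue on the Robinson--Schensted insertion: the values $k+1,k+2$ sit in the second row of $P$ while $k-1$ sits in the first row, so $k+1$ and $k+2$ must each have been bumped out of the first row by a strictly smaller value inserted later, and tracking these bumps (using $k-2,k+3$ to exclude a forbidden length-$3$ decreasing subsequence in the spurious orders) forces $k-1$ to appear after both. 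Equivalently one may invoke the fact that restricting a word to a value-interval commutes with insertion up to jeu-de-taquin rectification: the subtableau of $P$ on $\{k-1,k,k+1,k+2\}$ rectifies to the fixed tableau of shape $(2,2)$ with rows $(k-1,k)$ and $(k+1,k+2)$, which confines the restricted order to $\{3412,3142\}$, after which the recording data eliminates $3142$. Either route yields the crossing relations, hence the invariance of $3412$, and the criterion of \cite{LS} then shows that every element of both cells is non-smooth.
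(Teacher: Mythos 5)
Your reduction is the right one, and it matches the paper's: Lemma \ref{ii1} fixes the three consecutive-pair relations, the only orders compatible with them are $1324,1342,3124,3142,3412$, and (via the rectification fact, or directly) everything comes down to ruling out the order $(k+1,k-1,k+2,k)$, i.e.\ $3142$; you are also right that $k\ge 3$ must enter, since at $k=2$ the element $3142$ lies in the cell and is smooth. The problem is that the one step where $k\ge 3$ has to do its work --- eliminating $3142$ --- is exactly the step you never carry out. Both of your proposed routes are gestures rather than arguments, and the first one aims at a contradiction that does not exist: the spurious order $3142$, together with \emph{all} the Knuth-invariant relations (including those involving $k-2$ and $k+3$), does not force a decreasing subsequence of length $3$. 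Concretely, for $k=3$ the word $w=(4,1,2,5,6,3)$ satisfies every invariant relation, restricts on $\{2,3,4,5\}$ to $(4,2,5,3)$ (the order $3142$), and contains no decreasing subsequence of length $3$; what is wrong with it is that its longest \emph{increasing} subsequence $(1,2,5,6)$ has length $4>3$, i.e.\ its first row is too long. So no amount of bump-tracking toward a forbidden column can succeed: the obstruction lives in the rows, not the columns. Your second route, ``the recording data eliminates $3142$,'' is not an argument at all, and no purely local analysis of the interval $\{k-1,k,k+1,k+2\}$ can work, since for $k=2$ the order $3142$ genuinely occurs in the cell; any valid elimination must use global information about the shape.

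The paper closes precisely this gap in one line, and you should too: if $k+2$ came after $k-1$ in some element of the cell, then chaining Lemma \ref{ii1} over the consecutive pairs in $\{1,\dots,k-1\}$ and in $\{k+2,\dots,2k\}$ (each of these blocks keeps its increasing positional order) produces an increasing subsequence $(1,2,\dots,k-1,k+2,\dots,2k)$ of length $2k-2$; by Proposition \ref{rowcolumn} the longest increasing subsequence of any element of the cell equals the first-row length $k$, so $2k-2\le k$, i.e.\ $k\le 2$, contradicting $k\ge 3$. With that step inserted in place of your two sketched routes, the rest of your argument (the invariance of the $3412$ pattern on $\{k-1,k,k+1,k+2\}$, the identical treatment of the shape $(k+1,k)$ with $\{k,k+1,k+2,k+3\}$, and the criterion of \cite{LS}) is correct and coincides with the paper's proof.
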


\begin{proof}
We only prove the first case and the second one is exactly the same. By Lemma \ref{ii1},  $k+1$ is always before $k$, $k+2$ is always after $k+1$ and $k-1$ is before $k$. If the relative position of $(k+1,k+2,k-1,k)$ will not change under the action of the Knuth relations,  we can prove the proposition. If not, the only possibility is that $k+2$ is after $k-1$ under some actions of the Knuth relations. But now there will exist an increasing subsequence $(1,2,\dots,k-1,k+2,\dots,2k)$ by Lemma \ref{ii1}. The length of the subsequence is $2k-2$. But we know that the longest increasing subsequence is the length of the first row by Proposition \ref{rowcolumn}. So we have $2k-2 \le k$, which means $k\leq 2$. This is a  contradiction since $k\geq 3$.

%

\end{proof}

For the following two cases, we can similarly prove that all elements of the given KL right cells are non-smooth:

\[
P=\scriptsize{\begin{tikzpicture}[scale=\domscale+0.3,baseline=-28pt]
\hobox{0}{0}{1}
\hobox{1}{0}{2}
\hobox{2}{0}{\dots}
\hobox{3}{0}{k}
\hobox{4}{0}{2k+1}
\hobox{0}{1}{k+1}
\hobox{1}{1}{k+2}
\hobox{2}{1}{\dots}
\hobox{3}{1}{2k}
\end{tikzpicture}}
\]
and
\[
P=\scriptsize{\begin{tikzpicture}[scale=\domscale+0.3,baseline=-35pt]
\hobox{0}{0}{1}
\hobox{1}{0}{2}
\hobox{2}{0}{\dots}
\hobox{3}{0}{k}
\hobox{0}{2}{2k+1}
\hobox{0}{1}{k+1}
\hobox{1}{1}{k+2}
\hobox{2}{1}{\dots}
\hobox{3}{1}{2k}
\end{tikzpicture}.}
\]

Now we give a sufficient condition  {implying} that all elements in a given KL right cell are non-smooth.

\begin{prop}
Let $P$ be a Young tableau and $l$ be the length of the first row. When $ P$ is  like:
\[
P=\scriptsize{\begin{tikzpicture}[scale=\domscale+0.3,baseline=-35pt]
\hobox{0}{0}{1}
\hobox{1}{0}{2}
\hobox{2}{0}{\dots}
\hobox{3}{0}{\dots}
\hobox{4}{0}{k}
\hobox{5}{0}{j}
\hobox{6}{0}{\dots}
\hobox{0}{1}{k+1}
\hobox{1}{1}{k+2}
\hobox{2}{1}{\dots}
\hobox{3}{1}{k+m}
\hobox{0}{2}{\vdots}
\end{tikzpicture}},
\]
and $k+m-2>l$,  { all elements of the KL right cell contain
the pattern $3412$,  thus in particular are non-smooth. }

\end{prop}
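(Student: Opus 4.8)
The plan is to exhibit an occurrence of the pattern $3412$ that persists in \emph{every} element of the right cell, exactly in the spirit of the previous proposition. I claim the subsequence to track is $(k+1,k+2,k-1,k)$: its entries have relative order $3,4,1,2$ by value, so it realizes $3412$. Since all elements of the right cell share the tableau $P$, they are mutually Knuth equivalent, so it suffices to show this occurrence survives in each such element; the smoothness criterion of Lakshmibai--Sandhya then forces every element to be non-smooth.

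First I would record the invariances coming from Lemma \ref{ii1}. Reading the row word $\pi_P=R_N\cdots R_2R_1$, the second-row entries $k+1,\dots,k+m$ all precede the first-row entries $1,\dots,k$, while within each row the entries increase left to right. Applying Lemma \ref{ii1} to consecutive pairs gives, in every element of the cell, the fixed relative positions $k+1$ before $k+2$, $k-1$ before $k$, and $k+1$ before $k$; more generally $1$ before $2$ before $\cdots$ before $k$ and $k+1$ before $\cdots$ before $k+m$. Consequently the only relative order among $\{k-1,k,k+1,k+2\}$ not already pinned down is that of $k+2$ and $k-1$, and the four entries form the pattern $3412$ precisely when $k+2$ precedes $k-1$.

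The heart of the argument is to rule out the opposite order. Suppose some element $w'$ of the cell had $k-1$ before $k+2$. Combined with the invariances above, the values $1,2,\dots,k-1,k+2,k+3,\dots,k+m$ would then occur in $w'$ in increasing positional order, yielding an increasing subsequence of length $(k-1)+(m-1)=k+m-2$. By Proposition \ref{rowcolumn} the longest increasing subsequence of $w'$ has length equal to the first row of $P(w')=P$, namely $l$, so $k+m-2\le l$, contradicting the hypothesis $k+m-2>l$. (The hypothesis also forces $k\ge 3$ and $m\ge 3$, since $l\ge k$ and $l\ge m$, so all four tracked entries genuinely exist.) Hence $k+2$ precedes $k-1$ in every element, the occurrence $(k+1,k+2,k-1,k)$ of $3412$ is present throughout the cell, and every element is non-smooth.

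I expect the only real subtlety — rather than a genuine obstacle — to be bookkeeping: verifying that the tracked entries occupy the claimed cells for an \emph{arbitrary} admissible shape with possibly many lower rows, and checking that the presence of those lower rows never disturbs the relative positions used above. It does not, since the row word places lower rows even earlier and the argument only invokes Lemma \ref{ii1} on consecutive pairs among $1,\dots,k+m$. The choice of the full tail $k+2,\dots,k+m$ of the second row is what matches the increasing-subsequence length to the hypothesis $k+m-2>l$, so this is the single point at which the specific bound enters.
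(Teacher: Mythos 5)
Your proof is correct and follows essentially the same route as the paper: both track the occurrence $(k+1,k+2,k-1,k)$ of the pattern $3412$, use Lemma \ref{ii1} to pin down all relative positions except that of $k+2$ versus $k-1$, and rule out $k-1$ preceding $k+2$ because it would create an increasing subsequence $(1,\dots,k-1,k+2,\dots,k+m)$ of length $k+m-2>l$, contradicting Proposition \ref{rowcolumn}. Your additional checks (that the hypothesis forces $k\ge 3$, $m\ge 3$, and that lower rows do not interfere) are sound refinements of the paper's terser argument, not deviations from it.
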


\begin{proof}
The subsequence $(k+1,k+2, k-1,k)$ in the row word of $P$ makes the pattern $3412$.   If $k+2$ is after $k-1$ under some actions of the Knuth relations, there will exist an increasing subsequence $(1,2,\dots,k-1,k+2,\dots,k+m)$ in the new permutation by Lemma \ref{ii1}. The length of this subsequence is $k+m-2$. But based on our assumption, it is larger than the length of the first row. This is a contradiction since Proposition \ref{rowcolumn}.
\end{proof}

Note that in the above proposition, we  have $k\geq m$.

\subsection{Invariant subsequences with the pattern $4231$}

We find that some special permutations will have some invariant subsequences satisfying the pattern $4231$.

\begin{prop}
If the Young tableau $P$  has only two columns and is  like the following:
\[
P=\scriptsize{\begin{tikzpicture}[scale=\domscale+0.3,baseline=-70pt]
\hobox{0}{0}{\vdots}
\hobox{1}{0}{\vdots}
\hobox{0}{1}{k-2}
\hobox{1}{1}{k-1}
\hobox{0}{2}{k}
\hobox{1}{2}{k+1}
\hobox{0}{3}{k+2}
\hobox{1}{3}{k+3}
\hobox{0}{4}{\vdots}
\hobox{1}{4}{\vdots}
\hobox{0}{5}{\vdots}
\end{tikzpicture}},
\]
then all elements of the right cell are non-smooth. Actually, the subsequence $(k+2,k,k+1,k-1)$ always exists  under the action of the Knuth relation, which satisfies the pattern $4231$.
\end{prop}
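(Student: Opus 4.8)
The plan is to exhibit, inside the row word of $P$ and inside \emph{every} permutation of the corresponding right cell, a four-term subsequence occurring in the order $(k+2,k,k+1,k-1)$. Since $k+2>k+1>k>k-1$, such a subsequence is exactly an occurrence of the pattern $4231$, so by the smoothness criterion of \cite{LS} the associated Schubert variety is not smooth. Thus the entire statement reduces to checking that these four values keep this relative order across the whole cell, and I would organize the proof around establishing the needed comparisons one at a time.

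First I would fix the initial relative order from the row word. Reading the rows of $P$ from the bottom upward, the three displayed two-box rows $(k-2,k-1)$, $(k,k+1)$, $(k+2,k+3)$ contribute the block $k+2,k+3,k,k+1,k-2,k-1$. In particular, in the row word itself $k+2$ precedes $k$, $k$ precedes $k+1$, and $k+1$ precedes $k-1$, which is precisely the pattern I want. Next I would propagate this order through the whole cell. Lemma \ref{ii1} immediately supplies the three comparisons between consecutive integers that are preserved under all Knuth moves: $k$ stays before $k+1$, $k+2$ stays before $k+3$, and $k-2$ stays before $k-1$ (these pairs all lie inside the displayed rows, so they are genuinely present).

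The remaining two comparisons I need, $k+2$ before $k$ and $k+1$ before $k-1$, are between non-consecutive values and are \emph{not} controlled by Lemma \ref{ii1}; this is the only real obstacle, and it is where the two-column hypothesis must enter. Since $P$ has exactly two columns, its first row has length $2$, so by Proposition \ref{rowcolumn} every permutation in the cell has longest increasing subsequence of length $2$, i.e. no increasing subsequence of length $3$. Hence if $k$ came before $k+2$, then $k,k+2,k+3$ would be increasing (using $k+2$ before $k+3$), which is impossible; so $k+2$ is before $k$. Symmetrically, if $k-1$ came before $k+1$, then $k-2,k-1,k+1$ would be increasing (using $k-2$ before $k-1$), again impossible; so $k+1$ is before $k-1$. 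Combining the five comparisons, in every element of the cell we have $k+2$ before $k$ before $k+1$ before $k-1$, so the pattern $4231$ occurs and the element is non-smooth. The key point to get right is exactly this supplementation of the consecutive-pair invariance of Lemma \ref{ii1} by the length-of-first-row constraint of Proposition \ref{rowcolumn}, together with noting that the auxiliary values $k-2$ and $k+3$ are available because they sit in the displayed rows $(k-2,k-1)$ and $(k+2,k+3)$.
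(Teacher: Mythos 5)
Your proof is correct and follows essentially the same route as the paper's: Lemma \ref{ii1} handles the consecutive pairs, and the two non-consecutive comparisons ($k+2$ before $k$, $k+1$ before $k-1$) are forced by contradiction using exactly the same auxiliary triples $(k,k+2,k+3)$ and $(k-2,k-1,k+1)$ together with the length-two bound on increasing subsequences from Proposition \ref{rowcolumn}. Your write-up is merely more explicit than the paper's (spelling out the row word and the availability of $k-2$ and $k+3$), which is fine.
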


\begin{proof}
By Lemma \ref{ii1}, we find that $k+2$ is before $k+3$. If $k$ is before $k+2$, the subsequence $(k,k+2,k+3)$ exists, which implies the length of the first row of $P$ is larger than $2$. This is a contradiction.  If $k-1$ is before $k+1$, then the subsequence $(k-2,k-1,k+1)$ exists, which is also a contradiction. Thus, the subsequence $(k+2,k,k+1,k-1)$ always exists  under the actions of the Knuth relations.
\end{proof}
\begin{Rem}
 {Despite the preceding analysis that identifies right cells containing exclusively non-smooth elements through the detection of invariant subsequences matching the patterns $3412$ or $4231$, this characterization does not imply that \emph{every} element within a KL right cell with only non-smooth elements necessarily admits such invariant pattern-containing subsequences. Moreover, there exist examples of right cell with only non-smooth elements but there exist elements avoiding the pattern $3412$ and some elements avoiding $4231$. Here we give an example. Let $\pi_P=(4, 2, 6, 5, 3, 1)$. The corresponding right cell contains only non-smooth elements. However, Within this right cell, the element $(4, 2, 6, 5, 3, 1)$ contains $4231$ and avoids $3412$, while the element $(4, 6, 5, 2, 1, 3)$ contains $3412$ and  avoids $4231$.}

\end{Rem}

\section{Young tableaux with two columns}\label{twocolumn}

The  Young tableaux with only two columns   {frequently arise in} representation theory of Lie algebras and Lie groups, see for example \cite{BX,BXX}. Now we pay attention on KL right cells  corresponding to these special tableaux. Let the Young tableau $P$ be  as follows:

\[
P={\begin{tikzpicture}[scale=\domscale+0.1,baseline=-46pt]
\hobox{0}{0}{1}
\hobox{1}{0}{b}
\hobox{1}{1}{\vdots}
\hobox{0}{1}{\vdots}
\hobox{0}{2}{\vdots}
\hobox{0}{3}{\vdots}
\hobox{0}{4}{a}
\hobox{1}{2}{c}
\end{tikzpicture}.}
\]

We can easily find that the permutation $\pi_P=(a,\dots,1,c,\dots,b)$ is the column word corresponding to  $P$. Now we have the following proposition.

\begin{prop}
Let $P$ be a Young tableau with $\pi_P=(a,\dots,1,c,\dots,b)$ as above. If $P$ satisfies one of the following conditions:
\begin{enumerate}
	\item The numbers below $b$ are all larger than $a$.
	\item  There exist  numbers below $b$ smaller than $a$. Let $d$ be the largest one and  the elements of the first column which are smaller than $d$ are also smaller than $b$.
		
\end{enumerate}
then the KL right cell will have at least one smooth element.  {More precisely, the column word $(a, \dots, 1, c, \dots, b)$ is smooth if and only if (1) or (2) hold.}

\end{prop}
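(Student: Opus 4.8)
The plan is to prove the sharper and more uniform statement that, under either hypothesis, the \emph{column word} $\pi^c_P=(a,\dots,1,c,\dots,b)$ itself is a smooth element of the right cell. Since $P(\pi^c_P)=P$, the column word lies in the right cell, so by the Lakshmibai--Sandhya criterion (Proposition~\cite{LS}) it suffices to show that $\pi^c_P$ avoids both $3412$ and $4231$. Throughout I would write the first column of $P$, read top to bottom, as $1=x_1<x_2<\dots<x_p=a$ and the second column as $b=y_1<y_2<\dots<y_q=c$; the row conditions give $x_i<y_i$, and in particular $b=y_1$ is the smallest entry of the second column while $a=x_p$ is the largest entry of the first column.

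The key structural observation is that $\pi^c_P$ is a concatenation of two strictly decreasing runs: reading the first column from the bottom gives the decreasing sequence $A=(a,x_{p-1},\dots,x_2,1)$, and reading the second column from the bottom gives the decreasing sequence $B=(c,y_{q-1},\dots,y_2,b)$. First I would show that \emph{any} concatenation of two decreasing runs avoids $3412$: an occurrence of $3412$ needs two disjoint ascending pairs, but the lower ascending pair (the ranks $1,2$) forces its first index into $A$ and its second into $B$, after which the upper ascending pair (ranks $3,4$) would have to sit strictly to the left, entirely inside the decreasing run $A$, which is impossible. This step is purely combinatorial and uses neither hypothesis.

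Next I would reduce $4231$-avoidance to a statement about values. In a two-run permutation the middle ``$23$'' ascent of a $4231$ pattern must cross from $A$ into $B$, so an occurrence is equivalent to the existence of first-column values $\alpha_1>\alpha_2$ and second-column values $\beta_1>\beta_2$ with the interleaving $\alpha_1>\beta_1>\alpha_2>\beta_2$ (the positions then automatically fall in the required order, since larger values come first in each decreasing run). Thus the entire problem becomes: show that each hypothesis rules out such an interleaving. Under hypothesis (1) this is immediate: from $\beta_1>\beta_2\ge b$ the value $\beta_1$ lies strictly below $b$ in the second column, so by (1) it satisfies $\beta_1>a\ge\alpha_1$, contradicting $\alpha_1>\beta_1$.

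The main obstacle, and the only place the finer hypothesis enters, is case (2). Here I would argue by contradiction from an interleaving $\alpha_1>\beta_1>\alpha_2>\beta_2$. Since $\beta_2\ge b$ we get $\alpha_2>b$; and since $\beta_1>\beta_2\ge b$ the value $\beta_1$ lies strictly below $b$, while $\beta_1<\alpha_1\le a$ shows $\beta_1<a$, so by the very definition of $d$ (the largest second-column entry below $b$ that is still $<a$) we have $\beta_1\le d$, whence $\alpha_2<\beta_1\le d$. Therefore $\alpha_2$ is a first-column entry with $b<\alpha_2<d$, i.e.\ $\alpha_2\in[b,d)$, directly contradicting the hypothesis that every first-column entry smaller than $d$ is smaller than $b$. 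This contradiction shows $\pi^c_P$ avoids $4231$ in case (2) as well, so the column word is smooth and the right cell contains a smooth element. The delicate points to get right are the bookkeeping that $b$ and $a$ are the extreme entries of their columns and that the inequality $\beta_1\le d$ is exactly what the definition of $d$ supplies; once these are in place, both cases collapse to the single impossible conclusion $\alpha_2\in[b,d)$.
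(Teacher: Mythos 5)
Your proposal is correct and follows essentially the same route as the paper: both arguments exhibit the column word $(a,\dots,1,c,\dots,b)$ as a smooth member of the cell by checking that it avoids $3412$ and $4231$ under the Lakshmibai--Sandhya criterion, with the case analysis on $d$ matching the paper's. The only difference is one of detail: you prove the $3412$-avoidance and the reduction of $4231$ to the interleaving $\alpha_1>\beta_1>\alpha_2>\beta_2$ explicitly, whereas the paper simply asserts these facts.
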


\begin{proof}
Firstly we know that column word $(a,\dots,1,c,\dots,b)$ avoids the pattern $3412$. So we focus on the pattern $4231$. Let us assume that $a$ is $``4"$ and $b$ is $``1"$. Now if the numbers below $b$ are larger than $a$, the pattern $4231$ will not appear. If there exist numbers below $b$ smaller than $a$, we denote the  largest one by $d$. If the first column has an element $e$ satisfying $b<e<d$, the pattern $4231$  will appear. Otherwise, the pattern $4231$ will not appear.

\end{proof}
For this point, we can further discuss the Young tableau of the following  {form}:

\[
P={\begin{tikzpicture}[scale=\domscale+0.1,baseline=-36pt]
\hobox{0}{0}{1}
\hobox{1}{0}{b}
\hobox{1}{1}{c}
\hobox{0}{1}{d}
\hobox{0}{2}{\vdots}
\hobox{0}{3}{a}
\end{tikzpicture}.}
\]

First, we have the fact that the corresponding KL right cells have non-smooth elements since Theorem \ref{main}. From the column word expression, we have the followings:
\begin{enumerate}[leftmargin=18pt]
\item If $c>a$  {which implies that $c=n$}, the KL right cell will have smooth elements.
\item If $c<a$ and the elements of the first column which are smaller than $c$ are also smaller than $b$,  {which indicates that $c=b+1$.} Then the KL right cell will have smooth elements.
\end{enumerate}

Now if the first column has elements which are smaller than $c$ and larger than $b$, we can solve one special case.

\begin{prop}\label{smoothnon}
Let the Young tableau $P$ be as follows:
\[
P={\begin{tikzpicture}[scale=\domscale+0.1,baseline=-45pt]
\hobox{0}{0}{1}
\hobox{1}{0}{2}
\hobox{1}{1}{c}
\hobox{0}{1}{3}
\hobox{0}{2}{b}
\hobox{0}{3}{\vdots}
\hobox{0}{4}{a}
\end{tikzpicture}.}
\]
Suppose $a>c$  and  $3<b<c$ in the first column. Then the KL right cell has both smooth and non-smooth elements.

\end{prop}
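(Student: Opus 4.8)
The non-smooth half of the statement is already in hand: the shape of $P$ is $(2,2,1^{a-4})$, which is none of the shapes appearing in Theorem~\ref{main}, so by that classification the cell cannot consist entirely of smooth elements (this is also the assertion made just before the proposition). Concretely, the column word $\pi^c_P=(a,\dots,c+1,c-1,\dots,3,1,c,2)$ is itself non-smooth: since $3$ is a first-column entry with $2<3<c$, the subsequence $(a,3,c,2)$ realizes the pattern $4231$. Thus the entire content of the proposition is to produce \emph{one} smooth element of the cell. The plan is to exhibit the explicit representative
\[
w=(c-1,\,1,\,a,\,a-1,\,\dots,\,c+1,\,c,\,c-2,\,\dots,\,3,\,2),
\]
that is, $w$ begins with $c-1,1$ and then lists the remaining entries $\{2,3,\dots,a\}\setminus\{c-1\}$ in strictly decreasing order. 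The hypotheses $a>c$ and the existence of a first-column entry strictly between $2$ and $c$ (equivalently $c>3$) are exactly what make $w$ well defined, with $c-1\geq 3$ distinct from $1$ and $2$.

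Smoothness of $w$ is the easy step, and I would dispatch it first. The key observation is that $w$ is the concatenation of only two strictly decreasing runs, $(c-1,1)$ and $(a,a-1,\dots,c+1,c,c-2,\dots,3,2)$; consequently every ascending pair $w_i<w_j$ with $i<j$ must have $i\in\{1,2\}$, because the long run contributes no ascents and $w_1>w_2$. From this, neither forbidden pattern can occur. For $3412$, the ``high'' ascending pair would have to begin at position $1$ (position $2$ carries the global minimum $1$ and is too small to head a high pair), hence end inside the strictly decreasing tail, leaving no room for a second, lower ascending pair after it. For $4231$, the inner ascending pair would have to begin at position $1$ (impossible, nothing precedes it) or at position $2$ (then its smaller value is the global minimum $1$, so nothing strictly smaller can follow). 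By the pattern-avoidance criterion of \cite{LS}, $w$ is smooth.

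The remaining, and principal, step is to verify $P(w)=P$, which is what places $w$ in the prescribed right cell; this is where the real bookkeeping lies. I would check it directly by Robinson-Schensted insertion: inserting $c-1$ and then $1$ produces the top two entries $1$ (on top) and $c-1$ of the first column, after which inserting the decreasing tail cascades down a staircase and fills out the remaining boxes, terminating exactly at $P$. One tracks the intermediate tableau as the insertion passes $c+1,\dots,a$, then $c$, then $c-2,\dots,3$, and finally $2$. The main obstacle is simply to organize this bumping cleanly for arbitrary $a$ and $c$ rather than on sample values; a workable alternative is to prove $w\stackrel{K}{\sim}\pi_P$ by an explicit chain of Knuth relations moving $c-1$ to the front and reshuffling the tail, but the insertion computation is the more transparent route. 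Once $P(w)=P$ is established, $w$ is a smooth element and the column word $\pi^c_P$ is a non-smooth element of the same cell, which proves the proposition.
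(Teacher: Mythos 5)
Your witness permutation is exactly the paper's: the paper starts from the row word $\pi_P=(a,\dots,c+1,c-1,\dots,3,c,1,2)$ and transforms it by an explicit chain of Knuth relations (first kind, to slide $c$ leftward until it follows $c-1$; then second kind, to carry $c-1$ to the front; then second kind again, to carry $1$ into second position), arriving at $(b,1,a,\dots,c,\dots,3,2)$ where $b$ is the largest first-column entry below $c$. Since $c-1$ always lies in the first column, $b=c-1$ and this is precisely your $w$. The two arguments then divide the labor in opposite ways. Because the paper reaches $w$ by Knuth moves from $\pi_P$, membership in the cell is automatic, and the paper merely asserts smoothness (``Obviously the permutation \dots avoids the two patterns''). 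You posit $w$ outright, and instead prove smoothness carefully; your two-decreasing-runs argument (every ascent starts at position $1$ or $2$, which kills both patterns) is correct and is a genuine improvement on the paper's unproved assertion. Your explicit non-smooth witness, the subsequence $(a,3,c,2)$ of the column word realizing $4231$, is also correct and more concrete than the paper's appeal to Theorem \ref{main}, which is how the paper (in the sentence preceding the proposition) obtains non-smoothness as well.

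The incomplete piece is the one you yourself call principal: $P(w)=P$ is announced as a plan (``I would check it directly by Robinson--Schensted insertion'') but never executed, and that is exactly the step the paper actually proves. Your one-line description of the insertion is also slightly off: after inserting $c-1,1$, the entry $c-1$ does \emph{not} persist as the second entry of the first column (in $P$ that entry is $3$); as the tail is inserted, $c-1$ repeatedly gains and loses right neighbors and is eventually bumped down to its proper place, so the bookkeeping is more delicate than ``cascades down a staircase.'' The computation does succeed: after inserting $c-1,1,a,\dots,k$ with $c\le k\le a-1$ the tableau has first row $(1,k)$, second row $(c-1,k+1)$, and a one-box-per-row tail $k+2,\dots,a$; after continuing through $c-2,\dots,k'$ with $3\le k'\le c-2$ it has first row $(1,k')$, second row $(k'+1,c)$, and tail the remaining entries in increasing order; the final insertion of $2$ then yields $P$. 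So the gap is fillable, either by writing out this induction or by reproducing the paper's Knuth chain, but as submitted your proposal proves that a smooth permutation of the stated form exists without proving that it lies in the given right cell --- which is where the paper's proof does its work.
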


\begin{proof}
Let $b$ be the largest element in the first column which is smaller than $c$. Then $\pi_P=(a,\dots,b,\dots,3,c,1,2)$. Using the Knuth relations, we will have:
\begin{align*}
(a,\dots,b,\dots,3,c,1,2)&\stackrel{K}{\sim}(a,\dots,b,c,\dots,3,1,2 )\\
&\stackrel{K}{\sim}(b,a,\dots,c,\dots,3,1,2) \\
&\stackrel{K}{\sim}(b,a,\dots,c,\dots,1,3,2 )\\
&\stackrel{K}{\sim}\dots \\
&\stackrel{K}{\sim}(b,1,a,\dots,c,\dots,3,2).
\end{align*}

Obviously the permutation $(b,1,a,\dots,c,\dots,3,2)$ avoids the two patterns $3412$ and $4231$.
\end{proof}

\begin{Rem}
    For a given Young tableau $P$, the KL right cell $\mathcal{C}_R(P)$ will contain smooth elements if the column word $\pi_P^{c}$ or row word $\pi_P$ avoids the two patterns $3412$ and $4231$.  { For example, the  element $w=(4,3,2,1,7,6,5,9,8)$  is smooth since it  avoids the two patterns $3412$ and $4231$. Note that $w$ is the column word of the Young tableau $P(w)$. Thus the KL right cell $\mathcal{C}_R(P(w))$ contains smooth elements.}

 { But it can happen that both $\pi_P^{c}$ and   $\pi_P$ are not smooth when $\mathcal{C}_R(P)$  contains smooth elements.}
    For \[
P={\begin{tikzpicture}[scale=\domscale+0.1,baseline=-36pt]
\hobox{0}{0}{1}
\hobox{1}{0}{2}
\hobox{1}{1}{5}
\hobox{0}{1}{3}
\hobox{0}{2}{4}
\hobox{0}{3}{6}
\end{tikzpicture}},
\]
its column word $\pi_P^{c}=(6,4,3,1,5,2)$ contains the pattern $4231$ (since we can choose the subsequence $(6,3,5,2)$) and its row word $\pi_P=(6,4,3,5,1,2)$ contains the pattern $3412$ (since we can choose the subsequence $(3,5,1,2)$). But from Proposition \ref{smoothnon}, we know that the KL right cell $\mathcal{C}_R(P)$ contains both smooth and non-smooth elements. For example $w=(4,1,6,5,3,2)\in \mathcal{C}_R(P)$ is smooth.
\end{Rem}

\section{Right cells containing some smooth elements}\label{algorithm}
In this section, we want to give some algorithms to determine that a right cell contains a smooth element or not by using the Knuth relations.

 To give our algorithm, we recall the famous hook formula, which was found by Frame, Robinson and Thrall \cite{FRT}.

\begin{definition}
    If $v=(i,j)$ is a node in the diagram or Young tableau $P$, then it has hook
    $$H_v=H_{i,j}=\{(i,j')\mid j'\geq j\}\cup \{(i',j)\mid i'\geq i\}$$
    with corresponding hooklength
    $$h_v=h_{i,j}=|H_{i,j}|.$$
\end{definition}

\begin{example}
   Let \[
P={\begin{tikzpicture}[scale=\domscale+0.1,baseline=-36pt]
\hobox{0}{0}{1}
\hobox{1}{0}{2}
\hobox{1}{1}{5}
\hobox{0}{1}{3}
\hobox{0}{2}{4}
\hobox{0}{3}{6}
\end{tikzpicture}},
\]
then the dotted cells in
\[
{\begin{tikzpicture}[scale=\domscale+0.1,baseline=-30pt]
\hobox{0}{0}{}
\hobox{1}{0}{}
\hobox{1}{1}{\bullet}
\hobox{0}{1}{\bullet}
\hobox{0}{2}{\bullet}
\hobox{0}{3}{\bullet}
\end{tikzpicture}}
\]
are  the hook $H_{2,1}$ with hooklength
$h_{2,1}=4$.
\end{example}

 {Based on Proposition \ref{prop::2.11} and \ref{prop::2.12}, we can rephrase the standard hook formula and determinantal formula as follows.}

\begin{prop}[Hook formula \cite{FRT} ]\label{hook}
Let $P$ be a standard Young tableau and $\mathcal{C}$ be the corresponding Kazhdan--Lusztig right cell in the symmetric group $S_n$. Then
$$\#\mathcal{C}=\frac{n!}{\prod\limits_{(i,j)\in P} h_{i,j}}.$$

\end{prop}

There is another formula which is much older than the hook formula. In  the following, we set $1/r!=0$ if $r<0$ and $0!=1$.
\begin{prop}[Determinantal formula \cite{Sagan} ]\label{deter}
Let $P$ be a standard Young tableau with shape $p=[p_1,...,p_l]$ and $\mathcal{C}$ be the corresponding Kazhdan--Lusztig right cell in the symmetric group $S_n$. Then
$$\#\mathcal{C}=\frac{n!}{|(a_{ij})_{l\times l}|},$$
where $(a_{ij})_{l\times l}$ is a $l$ by $l$ matrix with $a_{ij}=\frac{1}{(p_i-i+j)!}$.
\end{prop}
Some more details about this formula can be found in Sagan \cite[\S 3.11]{Sagan}.

Now recall that we have $x \stackrel{K}{\sim}y$ if and only if $P(x)=P(y)$ for any two elements $x$ and $ y$ in $S_n$.
Let $w\in S_n$ and $P(w)$ be the corresponding Young tableau which corresponds to a right cell $\mathcal{C}_R(P(w))$.
Then we have the following algorithm to count the number of smooth elements in the right cell $\mathcal{C}_R(P(w))$:
\begin{enumerate}[leftmargin=18pt]
    \item If $w$ is smooth, then we put it in a set and denote it by $\mathcal{S}_1$. If $w$ is not smooth, we write $w=(w_1,w_2,w_3,...,w_n)$ and put it in another set and denote it by $\mathcal{N}_1$. Thus we have $\#\mathcal{S}_1=1$ or $\#\mathcal{N}_1=1$;
    \item From $w=(w_1,w_2,w_3,...,w_n)$, we consider any triple $(w_i,w_{i+1},w_{i+2})$ for $1\leq i\leq n-2$. If it is neither a decreasing nor an increasing sequence, we can use the Knuth relation to get a different element $w'_{\bar{i}}$ such that $w'_{\bar{i}}\stackrel{K}{\sim} w $. Let  $G_1:=\{w'_{\bar{i}}\mid 1\leq i\leq n-2\}$. Then we extract all smooth elements from $G_1$ and denote this new set by  $\mathcal{S}'_1$. Denote $\mathcal{N}'_1=G_1- \mathcal{S}'_1$. Then we define $\mathcal{S}_2=\mathcal{S}'_1\cup \mathcal{S}_1$ and $\mathcal{N}_2=\mathcal{N}'_1\cup \mathcal{N}_1$;
    \item  For each element in the set $G_1$, we repeat the procedure described in step $2$. This will yield new sets of smooth and non-smooth elements, denoted as and get some new smooth elements and non-smooth elements, denoted as $\mathcal{S}'_2$ and $\mathcal{N}'_2$ respectively.  Then we define $\mathcal{S}_3=\mathcal{S}'_2\cup \mathcal{S}_2$ and $\mathcal{N}_3=\mathcal{N}'_2\cup \mathcal{N}_2$;
    \item We continue the above process. Since the right cell $\mathcal{C}_R(P(w))$ has a finite number of elements, and every element in $\mathcal{C}_R(P(w))$ is Knuth equivalent to $w$,  the process will terminate after a finite number of steps. We denote the set of all smooth elements obtained throughout this process as $\mathcal{S}$, and the set of all non-smooth elements as  $\mathcal{N}$.
    Then we have $\#\mathcal{S}+\#\mathcal{N}=\#\mathcal{C}_R(P(w))$.

\end{enumerate}

The above algorithm is called \textit{smooth elements finding algorithm} (SEF algorithm for short).

\begin{example}
 Let \[
P={\begin{tikzpicture}[scale=\domscale+0.1,baseline=-36pt]
\hobox{0}{0}{1}
\hobox{1}{0}{2}
\hobox{1}{1}{5}
\hobox{0}{1}{3}
\hobox{0}{2}{4}
\hobox{0}{3}{6}
\end{tikzpicture}},
\]
then the hooklengths are: $h_{11}=5$, $h_{1,2}=2,$
$h_{2,1}=4$, $h_{2,2}=1$, $h_{3,1}=2$, $h_{4,1}=1$. Thus we have $\#\mathcal{C}_R(P(w))=\frac{6!}{5\cdot 2\cdot 4\cdot 1\cdot 2\cdot 1}=9.$
Let $w=(6,4,3,1,5,2)$ be the column word of $P$. Then $w$ contains the pattern $4231$ (since we can choose the subsequence $(6,3,5,2)$) and is not smooth. We consider the triples $(w_i,w_{i+1},w_{i+2})$ and find that there are only two triples: $(3,1,5)$ and $(1,5,2)$ from which we can use the Knuth relation to get new elements. Thus we get $w'_{\bar{3}}=(6,4,3,5,1,2)=w'_{\bar{4}}$, which is not smooth. Then from $w'_{\bar{3}}$, we can use the triples: $(4,3,5)$, $(3,5,1)$ and $(5,1,2)$. Thus we get a new element ${(w'_{\bar{3}})}_{\bar{2}}=(6,4,5,3,1,2)$. Here ${(w'_{\bar{3}})}_{\bar{3}}={(w'_{\bar{3}})}_{\bar{4}}=(6,4,3,1,5,2)=w$ is not a new element.
Then from ${(w'_{\bar{3}})}_{\bar{2}}$, we can use the triples: $(6,4,5)$, $(4,5,3)$ and $(3,1,2)$. Thus we get new elements $({(w'_{\bar{3}})}_{\bar{2}})_{\bar{1}}=(4,6,5,3,1,2)$ and $({(w'_{\bar{3}})}_{\bar{2}})_{\bar{4}}=(4,6,5,1,3,2)$. Here $({(w'_{\bar{3}})}_{\bar{2}})_{\bar{2}}=(6,4,3,5,1,2)=w'_{\bar{3}}$ is not a new element. We continue and get new elements
$(({(w'_{\bar{3}})}_{\bar{2}})_{\bar{1}})_{\bar{4}}=(4,6,5,1,3,2)$ and $(({(w'_{\bar{3}})}_{\bar{2}})_{\bar{4}})_{\bar{3}}=(6,4,1,5,3,2)$. Then we continue and get $((({(w'_{\bar{3}})}_{\bar{2}})_{\bar{1}})_{\bar{4}})_{\bar{3}}=(4,6,1,5,3,2)$ and $(((({(w'_{\bar{3}})}_{\bar{2}})_{\bar{1}})_{\bar{4}})_{\bar{3}})_{\bar{1}}=(4,1,6,5,3,2)$. We find that there is only one smooth element among these $9$ elements, i.e., $(4,1,6,5,3,2)$.
We draw the process as follows:









$$\begin{tikzpicture}[semithick,->]
 \Tree
[.(6,4,3,1,5,2)
[.(6,4,3,5,1,2) [.(6,4,5,3,1,2) [.(4,6,5,3,1,2) [.(4,6,5,1,3,2) [.(4,6,1,5,3,2) [.(4,1,6,5,3,2) ] ] ] ] [.(6,4,5,1,3,2) [.(6,4,1,5,3,2) ] ] ] ]
]
\end{tikzpicture}$$

If we use our program   ``Young", the input is $\{6~ 4~ 3~ 1~ 5~ 2\}$. The output will be the following:
\begin{verbatim}
P tableau:
1 2
3 5
4
6

Q tableau:
1 5
2 6
3
4

Hook lengths:
5 2
4 1
2
1

Number of standard Young tableaux for shape (2, 2, 1, 1): 9
Number of smooth permutations: 1
Number of non-smooth permutations: 8
Number of smooth permutations: {(4, 1, 6, 5, 3, 2)}
\end{verbatim}

Thus we have the same result but it is much faster to get these results.

\end{example}



\begin{Cor}\label{findsmooth}
    Let $L_w$ be a highest weight module of $\mathfrak{sl}(n,\mathbb{C})$. If we can find a smooth element in the KL right cell $\mathcal{C}_R(P(w))$ by using our SEF algorithm, we will have
 $ V(L_w)=\mathcal{V}(w)$.
\end{Cor}
\begin{proof}
    From Proposition \ref{constant}, we know that  $V(L_w)$ is a constant in the KL right cell $\mathcal{C}_R(P(w))$. If we can find  a smooth element $x$ in the KL right cell $\mathcal{C}_R(P(w))$ by using our SEF algorithm,  we will have $ V(L_x)=\mathcal{V}(x)$ by Proposition \ref{smoothequal}. Thus by Proposition \ref{constant}, we will have $ V(L_w)=V(L_x)=\mathcal{V}(x)$. By Proposition \ref{geometriccell}, $\mathcal{V}(x)=\mathcal{V}(w)$ since $x \stackrel{R}{\sim}w$. Therefore, we have $$ V(L_w)=\mathcal{V}(x)=\mathcal{V}(w).$$
\end{proof}

 {We also investigate the proportion of right cells containing a smooth element, defined as the number of right cells with smooth elements divided by the total number of right cells. Our results indicate that this proportion decreases as $n$ increases, showing that right cells containing smooth elements become increasingly rare.}
\begin{figure}[H]
    \centering
\includegraphics[width=12cm]{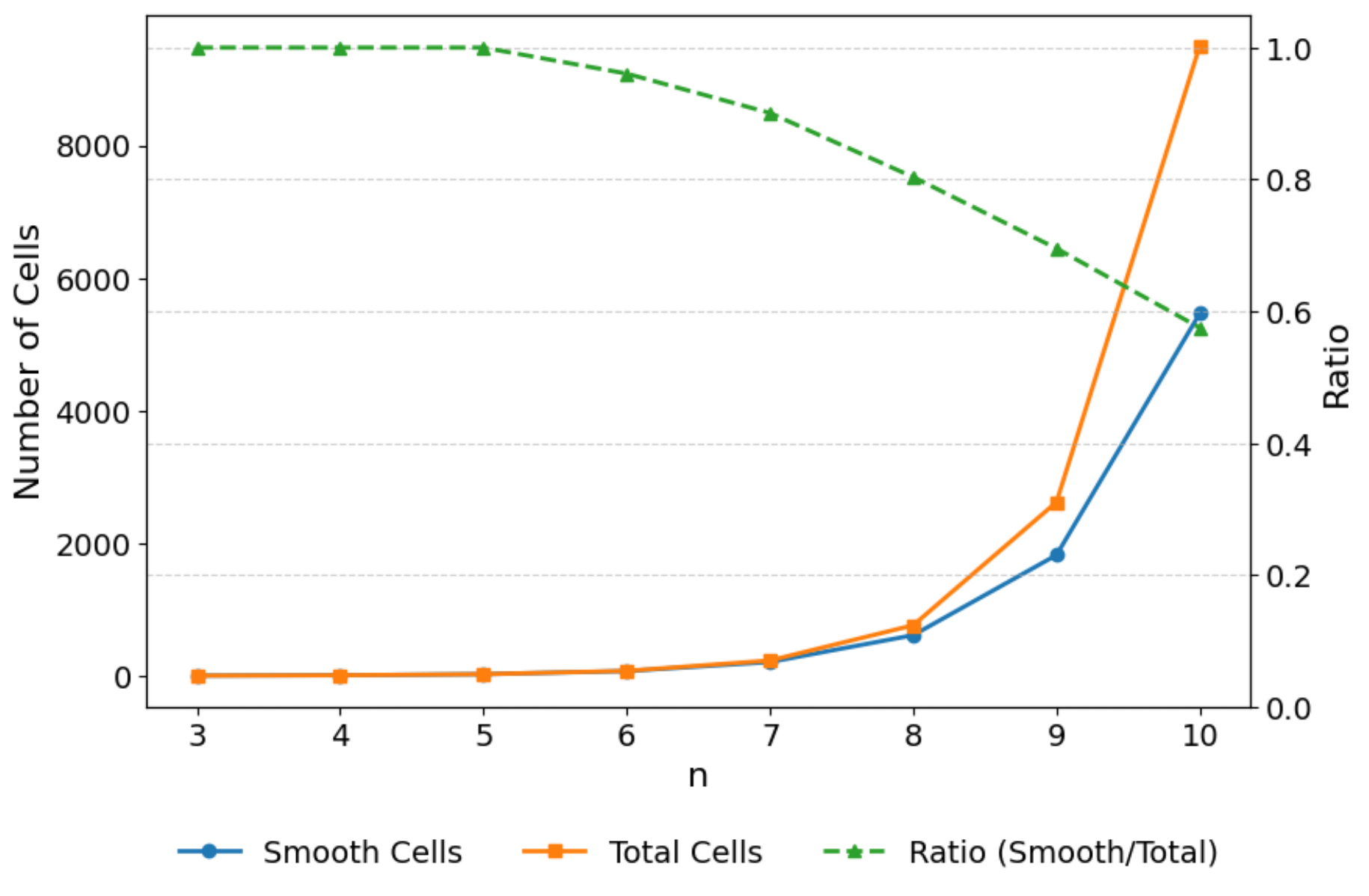}
    \caption{Cells containing smooth elements, total cells and their ratio}
    \label{fig:placeholder}
\end{figure}

\subsection*{Acknowledgments}
Z. Bai is
supported by NSFC Grant No. 12171344. We would like to thank the referee for very helpful suggestions and comments.

\end{document}